\newtheorem{theorem}{Theorem}
\newtheorem{proposition}[theorem]{Proposition}
\newtheorem{conjecture}[theorem]{Conjecture}
\newtheorem{corollary}[theorem]{Corollary}
\newtheorem{lemma}[theorem]{Lemma}
\theoremstyle{remark}
\newtheorem{remark}{Remark}
\numberwithin{theorem}{section}
\DeclareMathOperator{\Z}{\mathbb{Z}}
\newcommand{\bbN}{\mathbb{N}}
\protected\def\verythinspace{%
	\ifmmode
	\mskip0.5\thinmuskip
	\else
	\ifhmode
	\kern0.08334em
	\fi
	\fi
}
\title{Proof of the Diaconis--Freedman Conjecture on partially-exchangeable processes}
\author{Noah Halberstam and Tom Hutchcroft}
\begin{document}
	\maketitle

	\begin{abstract}
We prove a conjecture of Diaconis and Freedman (Ann.\ Probab.\ 1980) characterising the extreme points of the set of partially-exchangeable processes on a countable set. More concretely, we prove that the partially exchangeable sigma-algebra of any transient partially exchangeable process $X=(X_i)_{i\geq 0}$ (and hence any transient Markov chain) coincides up to null sets with the sigma-algebra generated by the initial state $X_0$ and the transition counts $( \#\{i\geq 0: X_i=x, X_{i+1}=y\} : x,y\in S)$. Our proof is based on an analysis of Gibbs measures for Eulerian paths on rooted digraphs, relying in particular on the connection to uniform spanning trees and Wilson's algorithm via the de Bruijn--Ehrenfest--Smith--Tutte (BEST) bijection, and yields an explicit method to sample from the conditional distribution of a transient Markov chain given its transition counts.
	\end{abstract}

\section{Introduction}

Given a countable set $S$, a random variable $X$ taking values in $S^\mathbb{N}$ (i.e., a random $S$-valued sequence) is said to be \textbf{exchangeable} if $(X_0,X_1,\ldots)$ has the same distribution as $(X_{\sigma(0)},X_{\sigma(1)},\ldots)$ for any permutation $\sigma:\bbN\to\bbN$ fixing all but finitely many elements of $\bbN$. Perhaps the best known theorem about exchangeable random sequences is \emph{de Finetti's theorem} \cite{de1937prevision} (see also \cite{diaconis1977finite,diaconis1980finite,gavalakis2021information,kirsch2019elementary}), which states that any exchangeable random sequence can be written as a \emph{mixture} of i.i.d.\ (independent and identically distributed) sequences. In other words, if $X$ is an exchangeable random sequence, we can sample $X$ by first sampling a random probability measure $\mu$ on $S$ and then sampling an i.i.d.\ sequence whose elements have law $\mu$. From the point of view of convex geometry, de Finetti's theorem is equivalent to the statement that the convex set of exchangeable probability measures on $S^\mathbb{N}$, denoted by $\mathscr{E}(S)$, has extreme point set $\operatorname{ext}\mathscr{E}(S)$ contained in the set of i.i.d.\ measures on $S^\mathbb{N}$. A refinement of de Finetti's theorem was later established by Hewitt and Savage \cite{hewitt1955symmetric}, who proved that these two sets are actually \emph{equal}: the extremal elements of $\mathscr{E}(S)$ are precisely the i.i.d.\ measures. 
This is equivalent to the probabilistic statement that i.i.d.\ random sequences satisfy a \emph{zero-one law} for exchangeable \emph{events}, a fact known as the \emph{Hewitt--Savage zero-one law}.
See e.g.\ \cite{aldous1985exchangeability,austin2008exchangeable,kallenberg2005probabilistic} for further background on exchangeable sequences and their applications and e.g.\ \cite{bayarri2004interplay,diaconis1977finite,sep-statistics,zabell1988symmetry} for discussions of the foundational role of de Finetti's theorem in Bayesian statistics.

In their highly influential 1980 paper \cite{MR556418}, Diaconis and Freedman considered a natural generalization of exchangeability they called \emph{partial exchangeability}. 
Given two finite sequences $x=(x_0,x_1,\ldots,x_n)$ and $y=(y_0,y_1,\ldots,y_n)$ taking values in $S$, let us write $x \simeq y$ if $x_0=y_0$ and the transition counts $\#\{0\leq i \leq n-1 :(x_{i},x_{i+1})=(u,v)\}$ and $\#\{0\leq i \leq n-1 :(y_{i},y_{i+1})=(u,v)\}$ coincide for every $u,v\in S$. 
A random sequence $X=(X_0,X_1,\ldots)$ taking values in a countable set $S$ is said to be \textbf{partially exchangeable} if 
\[
\mathbb{P}(X_0,X_1,\ldots,X_n = x_0,x_1,\ldots, x_n)=\mathbb{P}(X_0,X_1,\ldots,X_n = x_{0}',x_{1}',\ldots, x_{n}')
\]
for every $n\geq 0$ and every pair of sequences $x=(x_0,\ldots, x_n)$ and $x'=(x_0',\ldots,x_n')$ in $S$ with $x\simeq x'$. 
In other words, a random sequence is partially exchangeable if the probability for some finite sequence to appear as an initial segment of the sequence depends only on the starting point and the number of transitions between each ordered pair of states.
  We denote the set of partially exchangeable measures on $S^\mathbb{N}$ by $\mathscr{P}(S)$. Every Markov chain on $S$ together with a choice of starting point defines a partially exchangeable random sequence on $S$, and indeed partial exchangeability is sometimes known as \emph{Markov exchangeability}. (Other notions of partial exchangeability have been studied in e.g.\ \cite{eagleson1978limit,aldous1981representations}.) The main result of \cite{hewitt1955symmetric} is that every \emph{recurrent} partially exchangeable sequence can be written as a mixture of recurrent Markov chains\footnote{An incorrect version of this theorem, without the hypothesis of recurrence, had been suggested in the work of de Finetti \cite{deFinettiPartielle,de2011probabilitaa}. See \cite{fortini2002mixtures} for further discussion.},
   where a partially exchangeable process is said to be \textbf{recurrent} if it returns to its starting state infinitely often almost surely. This improved upon an earlier result of Freedman \cite{freedman1962mixtures}, which required the process to be stationary. Together with the Hewitt-Savage zero-one law, the Diaconis-Freedman theorem easily implies that the set of recurrent elements of $\operatorname{ext} \mathscr{P}(S)$ coincides with the set of (laws of trajectories of) recurrent Markov chains on $S$ with deterministic starting points. Thus, the Diaconis--Freedman theorem can be thought of yielding analogues of both de Finetti's theorem and the Hewitt--Savage theorem for \emph{recurrent} partially exchangeable processes, with i.i.d.\ sequences replaced by recurrent Markov chains.  The Diaconis-Freedman theorem was extended to \emph{continuous time} processes in the later work of Freedman \cite{freedman1996finetti}, quantitative versions of the theorem for finite processes were developed in the work of Zaman \cite{zaman1984urn,zaman1986finite}, and classifications of mixtures of various other kinds of processes in terms of their symmetries have been given in \cite{diaconis1988sufficiency,diaconis1984partial,diaconis1987dozen}. See \cite{DiaconisFreedmanPhilosophy} for a detailed discussion of the history of these results and of their consequences in the philosophy of Bayesian statistics.

The Diaconis--Freedman theorem has important applications in the study of processes with \emph{reinforcement}, such as edge-reinforced random walk and the vertex-reinforced jump process \cite{MR3189433,pemantle2007survey}, and has also inspired important work on the Bayesian analysis of Markov chains \cite{bacallado2011bayesian,diaconis2006bayesian,zabell1995characterizing}. As explained to us by Persi Diaconis, sampling algorithms originating in the study of finite partially-exchangeable processes have also been hugely influential in computational biology as they form a component of the BLAST algorithm, one of the most widely used algorithms for comparison and search of DNA and RNA sequences in bioinformatics \cite{karlin1990methods,pevzner2000computational}.

In \cite[Section 4]{MR556418}, Diaconis and Freedman made a conjecture concerning a counterpart of their theorem for \textbf{transient} partially exchangeable processes, that is, processes that visit each state at most finitely often. (In this case, it is \emph{not} true that every partially-exchangeable process can be written as a mixture of Markov chains, as follows by considering the deterministic partially-exchangeable sequence $0,0,1,1,2,2,3,3,\ldots$)
 We will state their conjecture probabilistically for now (as in their original paper) and explain its interpretations in terms of convex geometry and mixture representations in Section~\ref{subsec:mixtures}.

\begin{conjecture}[Diaconis--Freedman 1980]
Let $X$ be a transient partially exchangeable process taking values in a countable set $S$ with non-random initial state $X_0$. Then the partially exchangeable $\sigma$-algebra of $X$ is generated up to null sets by the transition numbers $M=(M(u,v))_{u,v\in S}=(\#\{i \geq 0 : (X_i,X_{i+1})=(u,v)\})_{u,v\in S}$.
\end{conjecture}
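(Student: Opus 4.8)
The plan is to reduce the conjecture to a concrete combinatorial statement about random Eulerian paths on multigraphs, and then solve that statement using the BEST bijection together with Wilson's algorithm. First I would set up the measure-theoretic framework: by the tail triviality of extreme partially exchangeable processes (the Diaconis--Freedman / Hewitt--Savage machinery for the recurrent case adapts, or one works directly), it suffices to show that every bounded partially-exchangeable function of $X$ is almost surely equal to a measurable function of $M$. Since the partially-exchangeable $\sigma$-algebra is by definition generated by the equivalence relation $\simeq$ on finite prefixes, and since $M$ is itself partially-exchangeable-measurable, the real content is the reverse containment: conditionally on $M=m$ (and on $X_0$, which is non-random), the law of $X$ must be a \emph{single} point mass modulo the partially-exchangeable $\sigma$-algebra, i.e. any two realizations compatible with $m$ that are not $\simeq$-equivalent must occur with... no --- more precisely, the conditional law of $X$ given $M$ is automatically partially-exchangeable-invariant, so what must be shown is that this conditional law is already $\mathcal{T}$-trivial, where $\mathcal{T}$ is the partially exchangeable $\sigma$-algebra.

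The key structural observation is that a transient trajectory with transition count array $M=m$ that uses each directed edge $(u,v)$ exactly $m(u,v)$ times is, after truncating at a large time, essentially an Eulerian path in the finite directed multigraph $G_m$ whose edge multiset is prescribed by $m$, starting from $X_0$. Transience forces $m$ to have finite total mass on each "visited" portion only in a limiting sense, so one must be careful: I would first prove that transience implies $M$ is almost surely \emph{locally finite with a distinguished finite "core"}, and that the trajectory decomposes into an exhausted finite part plus a tail --- here one exploits that a transient partially-exchangeable process, when it leaves a state for the last time, does so in a way that the de Bruijn--van Aardenne-Ehrenfest--Smith--Tutte correspondence converts the count data into a spanning-tree-plus-edge-ordering datum. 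Concretely, conditionally on $M=m$, the uniform measure on trajectories compatible with $m$ is (via BEST) in bijection with (arborescence of $G_m$ rooted appropriately) $\times$ (orderings of outgoing edges at each vertex), and Wilson's algorithm lets one sample the arborescence via loop-erased random walk. The partially-exchangeable $\sigma$-algebra, under this bijection, should correspond to a $\sigma$-algebra that is trivial because the arborescence and edge-orderings are "generated" in a way that any two choices are linked by a measure-preserving reshuffling that is a partial exchange.

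The main steps, in order, would be: (1) reduce to showing $\mathcal{T}$ is trivial under $\mathbb{P}(\,\cdot \mid M=m)$ for a.e. $m$; (2) show that under this conditioning, for any fixed truncation the finite prefix is a uniformly random $\simeq$-class-labelled Eulerian-type path, so that the conditional law is a \emph{mixture of uniform measures on $\simeq$-classes} and the question becomes whether there is only one such class with positive mass --- equivalently, whether $M$ determines the $\simeq$-class; (3) since $M$ does \emph{not} literally determine the $\simeq$-class in general (different Eulerian paths give the same counts), instead prove the genuinely probabilistic fact that \emph{any partially-exchangeable event has conditional probability in $\{0,1\}$ given $M$}, by exhibiting, for two non-$\simeq$-equivalent compatible trajectories $x,x'$, a coupling of the conditional laws that swaps their roles --- this is where the BEST bijection enters, identifying the "freedom" in reconstructing a trajectory from its counts with exactly the data (a uniform spanning arborescence plus uniform edge-orderings) that is invariant under the relevant symmetries; (4) handle transience/infiniteness by a limiting argument over truncation times, using Wilson's algorithm run towards infinity to control the arborescence of the (possibly infinite but locally finite) count graph and to show the tail contributes nothing to $\mathcal{T}$.

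I expect the main obstacle to be step (3)--(4): making the BEST bijection do real work in the \emph{transient, infinite} setting. In the finite recurrent-cycle setting BEST is classical, but here the count multigraph $G_m$ is genuinely infinite (the process never returns), so one needs the right notion of "Eulerian path to infinity", a corresponding infinite-volume BEST-type correspondence (with an arborescence \emph{oriented towards infinity}, which is precisely a Wilson-algorithm loop-erased-random-walk object), and a proof that the conditional law of this arborescence given the counts is genuinely uniform and its tail $\sigma$-algebra is trivial. Establishing that the partially-exchangeable $\sigma$-algebra pulls back to exactly the (trivial) tail of this UST/LERW object, with no leftover, is the crux; the convex-geometric and de Finetti-style consequences will then follow formally as indicated in the reference to Section~\ref{subsec:mixtures}.
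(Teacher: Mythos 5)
Your outline follows the same route as the paper: recast the conditional law given the transition counts as a random Eulerian path on the count digraph, use the BEST correspondence to trade the path for a spanning in-arborescence plus edge-orderings, and control the infinite-volume arborescence via Wilson's algorithm rooted at infinity. However, at the point you yourself flag as the crux, the proposal has no argument, and two genuinely needed ingredients are missing. First, nothing in your plan guarantees that the transition-count digraph is \emph{transient} for simple random walk, which is exactly what makes the infinite loop-erased walk, Wilson's algorithm rooted at infinity, and the wired uniform spanning in-arboretum well-defined; in the paper this is Lemma~\ref{lem:transience}, a short but essential deterministic fact about sourced Eulerian digraphs (in the recurrent regime the limiting arborescence need not even be unique). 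Second, and more seriously, your step (3)--(4) asks to show that the conditional law of the last-exit arborescence given the counts is ``genuinely uniform'' with trivial tail, and proposes to do this by ``exhibiting a coupling that swaps the roles'' of two compatible trajectories, i.e.\ a measure-preserving reshuffling. No such symmetry argument is available: the issue is not to compare two trajectories with the same counts, but to show that \emph{every} Gibbs measure on spanning in-arborescences rooted at infinity that could arise this way coincides with the wired limit, ruling out other boundary conditions. In the undirected setting this follows from stochastic monotonicity/electrical-network arguments, but these fail for directed graphs, which is precisely why the problem was open; the paper instead proves a uniqueness theorem for Gibbs measures supported on configurations with one-ended components (Theorem~\ref{thm:one_end_uniqueness}), adapting indistinguishability-type arguments for the wired uniform spanning forest, and feeds into it the structural observation (Lemma~\ref{lem:entrance_exit_basic_properties}) that last-exit arboreta of transient paths automatically have one-ended components.

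So the proposal correctly identifies the objects (BEST, Wilson, an in-arborescence ``oriented towards infinity'') and correctly locates the difficulty, but it does not supply the two ideas that close it: transience of the count digraph, and the one-endedness-based uniqueness of Gibbs measures replacing the unavailable monotonicity arguments. Without these, steps (3)--(4) are a restatement of the conjecture rather than a proof of it; with them, the rest of your reduction (conditioning on $M$, passing to the count digraph, and the limiting argument over truncations) matches the paper's proof of Theorems~\ref{thm:DF_Gibbs} and~\ref{thm:DF_explicit}.
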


Here, we define the equivalence relation $\simeq$ for infinite sequences by setting $x\simeq x'$ if and only if there exists $N<\infty$ such that $x_n=x'_n$ for all $n\geq N$ and $(x_0,\ldots,x_N)\simeq (x_0',\ldots,x_N')$, say that
 a set $A \subseteq S^\bbN$ is \textbf{partially exchangeable} if 
\[x \in A \Rightarrow x' \in A \qquad \forall x' \simeq x,\] 
 and define the 
 \textbf{partially exchangeable sigma-algebra} $\mathcal{P}(S)$ to be the sigma-algebra of partially exchangeable Borel subsets of $S^\bbN$.
 Equivalently, the partially exchangeable sigma-algebra is equal to $\bigcap_{n\geq 1} \mathcal{P}_n(S)$, where $\mathcal{P}_n(S)$ is the sigma-algebra in $S^\bbN=\{(x_0,x_1,\ldots):x_0,x_1,\ldots \in S\}$ generated by the initial state $x_0$, the tail $(x_m)_{m \geq n}$, and the transition numbers $(m_n(u,v))_{u,v\in S}$ defined by $m_n(u,v)=\#\{0\leq i \leq n-1 :(x_{i},x_{i+1})=(u,v)\}$. It is easily seen
  that a measure in $\mu\in \mathscr{P}(S)$ is extremal if and only if $\mathcal{P}(S)$ is trivial with respect to $\mu$, meaning that $\mu(A)\in\{0,1\}$ for every $A\in \mathcal{P}(S)$. Thus, the Diaconis--Freedman conjecture is equivalent to the statement that a measure in $\mathscr{P}(S)$ is extremal if and only if it is supported on configurations with a constant value of the initial state and the crossing numbers (see the proof of Corollary~\ref{cor:extreme_points} for more details). Since the partially exchangeable sigma-algebra $\mathcal{P}(S)$ always includes the tail sigma-algebra $\mathcal{T}(S)=\bigcap_{n\geq 1} \langle x_n,x_{n+1},\ldots \rangle$, the Diaconis--Freedman conjecture also implies that every transient partially-exchangeable process (and hence every transient Markov chain) has tail sigma-algebra contained in the completion of the sigma-algebra generated by its transition numbers.


The case of the Diaconis--Freedman conjecture in which $X$ is a transient \emph{Markov chain} has received especially significant attention over subsequent years. This interest was motivated in part by its close analogy with a conjecture on the Poisson boundaries of Lamplighter groups and free metabelian groups posed by Kaimanovich and Vershik \cite{MR704539,vershik1979random} and solved by Erschler \cite{erschler2010poisson,erschler2010poisson} and Lyons and Peres \cite{lyons2020poisson}, and to a problem of Kaimanovich \cite{kaimanovich1991poisson} asking for which Markov chains the \emph{exchangeable} sigma-algebra is generated by the \emph{vertex} local times. In this context, most progress has taken the form of sufficient conditions for a transient Markov chain to have infinitely many \emph{cut times} almost surely \cite{MR2789585,MR2644879,halberstam2023most,MR1687097,MR1423466,dur34690}, which in turn is a sufficient condition for the partially exchangeable sigma-algebra to be generated up to null sets by the transition numbers \cite{MR1687097}. (Here, a \emph{cut time} for the process $X$ is a time $t$ such that $\{X_i :i < t\}$ and $\{X_i : i \geq t\}$ are disjoint.) In particular, it is now known that every transient random walk on a group generated by a finitely-supported measure has infinitely many cut-times a.s.\ \cite{MR1687097,MR1983173}, and the authors \cite{halberstam2023most} recently proved that Markov chains have infinitely many cut times a.s.\ under assumptions only barely stronger than transience (e.g.\ spectral dimension strictly larger than $2$). On the other hand, there do exist transient Markov chains with only finitely many cut-times almost surely (indeed, there exist examples of both 
birth-death chains \cite{james2008transient} and bounded-degree graphs \cite{MR2789585} with this property),
  and as far as we know the Diaconis--Freedman conjecture has remained open even for, say, sums of i.i.d.\ integer random variables  without any assumptions on their tail decay.
The only work we are aware of that resolves a case of the conjecture \emph{without} using cut times is that of James \cite{JamesThesis} (published in \cite{james2008transient}), who proved the conjecture for nearest-neighbour random walks on trees; for such random walks the Diaconis--Freedman conjecture and Kaimanovich's question are equivalent since the vertex local times determine the transition counts.

\medskip

In this paper we completely resolve the Diaconis--Freedman conjecture.

\begin{theorem}
\label{thm:DF_main}
Let $X$ be a transient partially exchangeable process taking values in a countable set $S$ with non-random initial state $X_0$. Then the partially exchangeable $\sigma$-algebra of $X$ is generated up to null sets by the transition numbers of $X$.
\end{theorem}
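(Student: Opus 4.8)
The plan is to reduce Theorem~\ref{thm:DF_main} to a uniqueness statement for Gibbs measures describing Eulerian paths on rooted digraphs, and then to prove that uniqueness using the BEST bijection together with Wilson's algorithm rooted at infinity.

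\textbf{Reduction to a conditional-law uniqueness statement.} Let $\kappa_m$ be a regular conditional distribution of $X$ given the transition numbers $M=m$. The first step is to check that conditioning on $\{M=m\}$ preserves partial exchangeability, so that $\kappa_m\in\mathscr{P}(S)$ for a.e.\ $m$; the key point is that the endpoint of a finite path is determined by its starting point and its transition counts (via the Eulerian degree identities: the endpoint is the unique vertex with in-degree exceeding out-degree, or $x_0$ if all vertices are balanced), so that prefixes of length $n$ with the same transition counts automatically have the same endpoint and can be swapped while extending them identically into the future. Next, observe that the set of partially exchangeable measures supported on a fixed value $(x_0,m)$ of the initial state and transition numbers is a face of $\mathscr{P}(S)$, and that a measure lying in a face that is a singleton is an extreme point of $\mathscr{P}(S)$, hence (as noted in the excerpt) has trivial partially exchangeable $\sigma$-algebra. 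A standard argument then shows that if $\kappa_m$ is extremal for a.e.\ $m$ then every partially exchangeable event agrees a.s.\ with an event measurable with respect to $M$, which is the theorem. It therefore suffices to prove: \emph{for a.e.\ $m$, $\kappa_m$ is the unique partially exchangeable measure supported on $\{X_0=x_0,\,M=m\}$.}

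\textbf{Geometric reformulation.} For a.e.\ $m$, the set $\{X_0=x_0,\,M=m\}$ consists precisely of the Eulerian trails started from $x_0$ in the directed multigraph $G_m$ having $m(u,v)$ edges from $u$ to $v$ (a finite such trail being followed by the constant sequence at its endpoint); since $X$ is transient, $m$ has all entries finite a.s., and $G_m$ is a.s.\ connected, locally finite, has out-degree equal to in-degree at every vertex except $x_0$ where the out-degree exceeds the in-degree by one, and — when infinite — is one-ended (because a transient trail leaves every finite set, so deleting a finite vertex set leaves exactly one infinite component). Partial exchangeability of a measure on Eulerian trails of $G_m$ translates into a Gibbs/DLR condition: conditioned on the trail outside a finite window, it is uniform over the finitely many Eulerian trails consistent with that boundary data. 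So we must prove uniqueness of the Gibbs measure for this Eulerian-path specification on each such rooted digraph $(G_m,x_0)$. When $G_m$ is finite this is immediate: there are finitely many Eulerian trails, all with the same transition counts, so they must be equiprobable.

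\textbf{The infinite case and the main obstacle.} When $G_m$ is infinite I would apply the BEST bijection: Eulerian trails from $x_0$ to infinity correspond to pairs consisting of a spanning arborescence of $G_m$ oriented towards the end — obtained by selecting at each vertex the last out-edge used by the trail — together with, at each vertex, a linear ordering of its out-edges in which the arborescence edge comes last. Under this correspondence the Gibbs condition becomes: the orderings are independent and uniform given the arborescence, and the arborescence has a uniform-spanning-tree–type law on arborescences towards the end. One then shows that the network random walk on $G_m$ (which moves from $v$ to $w$ with probability $m(v,w)/\deg^{+}(v)$) is transient; this can be done by comparing its Green's function to the excessive measure $\deg^{+}$, which satisfies $\deg^{+}P=\deg^{+}-\delta_{x_0}$ and hence dominates the Green's function $\sum_n P^n(x_0,\cdot)$, forcing $\sum_n P^n(x_0,x_0)\le \deg^{+}(x_0)<\infty$. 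Transience makes Wilson's algorithm rooted at infinity available, producing the arborescence measure — and hence a candidate Gibbs measure — explicitly, which also yields the promised sampler for the conditional law of a transient Markov chain given its transition counts. The crux, and the step I expect to be the main obstacle, is to prove this candidate is the \emph{unique} Gibbs measure for the arborescence specification, i.e.\ to rule out a ``phase transition'' analogous to a discrepancy between the wired and free uniform spanning forests. Here I would exploit the rigidity of the one-ended, balanced-up-to-$x_0$ structure of $G_m$ — every vertex carries an out-pointer and there are no cycles or finite components, so the spanning forest is forced to be a single one-ended tree — together with a coupling argument threaded through Wilson's algorithm showing that conditioning on the configuration far from a finite window has a vanishing effect as the window grows. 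Establishing this in the generality of all such digraphs is the technical heart of the proof.
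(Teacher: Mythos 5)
Your overall strategy matches the paper's: condition on the transition numbers, recast the problem as uniqueness of a Gibbs/DLR specification for Eulerian paths on the multigraph $G_m$ rooted at $x_0$, pass through the BEST bijection to a Gibbs problem for spanning in-arborescences rooted at infinity, prove transience so that Wilson's algorithm rooted at infinity is available, and identify the candidate law with the wired uniform spanning in-arboretum. Your transience argument via the excessive measure $\deg^+$ (using $\deg^+P=\deg^+-\delta_{x_0}$) is a valid alternative to the paper's path-reversal comparison with the reversed digraph, and it suffices here because every vertex of $G_m$ is reachable from $x_0$ by the trail itself. (Two minor points: under transience the total number of transitions is infinite and every state is visited finitely often, so $G_m$ is always infinite and your ``finite case'' is vacuous; and a finite trail followed by a constant tail would make some $M(z,z)$ infinite.)

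The genuine gap is exactly at the step you flag as the crux: uniqueness of the arborescence Gibbs measure is not proved, and the heuristic you offer for it is incorrect. One-endedness of $G_m$ does not force a spanning configuration with out-degree one at every vertex, no cycles and no finite components to be a single one-ended tree: such a configuration (and the wired uniform spanning in-arboretum itself) can have several components, and a component could a priori be a bi-infinite path, hence two-ended; the paper explicitly cautions that its uniqueness theorem does not assert one-endedness of $\mathsf{WUA}_G$ components. The correct input is a property not of the graph but of the measures being compared: any proper Gibbs measure on Eulerian paths induces, via last-exit edges, a Gibbs measure on spanning in-arboreta whose components are one-ended almost surely, simply because the past of a vertex in the last-exit arboretum consists of vertices visited earlier by a transient path (Lemma~\ref{lem:entrance_exit_basic_properties}). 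The paper's Theorem~\ref{thm:one_end_uniqueness} then shows that \emph{any} Gibbs measure on in-arborescences rooted at infinity supported on one-ended-component configurations equals $\mathsf{WUA}_G$; its proof implements Wilson's algorithm in the finite approximations in stages---first the vertices outside a large window $V_m$, then the finitely many vertices of interest---and uses the one-endedness hypothesis to show that the trace $Z_m$ inside $V_m$ of the futures of outside vertices vanishes weakly as $m\to\infty$, so the staged algorithm converges to Wilson's algorithm rooted at infinity. Your proposal asserts that ``a coupling argument threaded through Wilson's algorithm'' should do this but does not supply it, and the rigidity claim you would base it on is false; without this argument you have constructed a candidate sampler but not shown it is the only partially exchangeable law with the given transition numbers, which is the content of the conjecture.
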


As explained above, this theorem is equivalent to the statement that for any assignment of initial state and transition numbers, there is at most one partially exchangeable process taking those transitions almost surely. Our proof of this theorem not only establishes the abstract uniqueness of such processes, but 
yields an explicit description of them in terms of a directed analogue of the \emph{wired uniform spanning forest} \cite{Pemantle,benjamini2001special}, which can be sampled using \emph{Wilson's algorithm} \cite{wilson1996generating}. See Theorem~\ref{thm:DF_explicit} for this explicit description. Our proof that the processes we construct using Wilson's algorithm are the \emph{only} extremal transient partially exchangeable processes builds on ideas first introduced to prove strong forms of ergodicity and tail triviality for the (undirected) wired uniform spanning forest known as \emph{indistinguishability theorems} \cite{hutchcroft2020indistinguishability,hutchcroft2017indistinguishability}. The connection between partially exchangeable processes and uniform spanning trees derives from the \emph{de Bruijn--Ehrenfest--Smith--Tutte \emph{(BEST)} bijection} for Eulerian paths \cite{van1987circuits,tutte1941unicursal}, which we review in Section~\ref{subsec:Euler_background}.

\subsection{Uniqueness of proper Gibbs measures on Eulerian paths}

To prove Theorem~\ref{thm:DF_main}, we will reformulate the conjecture as a statement about \emph{uniqueness of Gibbs measures} on Eulerian paths in sourced Eulerian digraphs. We begin by giving some relevant definitions. 

\medskip

A \textbf{digraph} (a.k.a.\ directed graph) $G=(V,E)$ consists of a countable set of vertices $V$ and a countable set of edges $E$ together with a function $E\to V\times V$ denoted by $e \mapsto (e^-,e^+)$ mapping each edge to its tail $e^-$ and its head $e^+$. (In particular, we allow for edges with head equal to tail and for multiple edges with the same head and tail.) For each vertex $v\in V$ we write $E^\rightarrow_v=\{e\in E: e^-=v\}$ for the set of edges pointing out of $v$ and write $E^\leftarrow_v=\{e\in E:e^+=v\}$ for the set of edges pointing into $v$; the cardinalities $|E^\rightarrow_v|$ and $|E^\leftarrow_v|$ are called the \textbf{out-degree} and \textbf{in-degree} of $v$, respectively. 
We say that $G$ is \textbf{locally finite} if every vertex has finite in-degree and out-degree. A \textbf{path} in a digraph is a sequence of vertices $(\gamma_i)_{i=n}^m$ and edges $(\gamma_{i,i+1})_{i=n}^{m-1}$ with $n,m\in \Z \cup \{-\infty,\infty\}$ such that $(\gamma_{i,i+1}^-,\gamma_{i,i+1}^+)=(\gamma_i,\gamma_{i+1})$ for every $n \leq i < m$; if $n=0$ and $m$ is finite we say that the path has starting point $e_0^-$ and final point $e_m^+$. We say that a path is \textbf{transient} if it visits each vertex at most finitely many times (in particular, finite paths are always transient).
We say that $G$ is \textbf{connected} if it is connected as an undirected graph, and that $G$ is \textbf{irreducible} if any vertex can be reached from any other vertex by a directed path.

\medskip

\noindent \textbf{Sourced Eulerian digraphs.} We will be particularly interested in the following classes of digraphs:
\begin{itemize} 
\item We define a \textbf{finite digraph with source and sink} to be a triple $(G,o,z)$ consisting of locally finite digraph $G=(V,E)$ together with a distinguished source vertex $o$ and a distinguished sink vertex $z$ such that for each vertex $u\in V$ there exists a path from $o$ to $u$ and from $u$ to $z$. We say that a finite digraph with source and sink $(G,o,z)$ with $o\neq z$ is \textbf{Eulerian} if either every vertex in $V\setminus \{o,z\}$ has in-degree equal to out-degree, the root $o$ has out-degree one higher than in-degree, and $z$ has in-degree one higher than out-degree; if $o=z$ we instead require that \emph{every} vertex has in-degree equal to out-degree. 
It is a classical theorem (the \emph{undirected} analogue of which was famously proven by Euler in 1735 \cite{euler1741solutio}) that a finite digraph $G$ is Eulerian with source $o$ and sink $z$ if and only if there is an \textbf{Eulerian path} from $o$ to $z$ in $G$, that is, a path that starts at $o$, ends at $z$, and crosses every edge of $G$ exactly once \cite[Chapter 13]{polya2009notes}.
\item We define a \textbf{infinite sourced digraph with sink at infinity}, which we will usually refer to simply as an infinite sourced digraph, to be a pair $(G,o)$ consisting of an infinite, connected, locally finite digraph $G$ and a distinguished source vertex $o$.
 We say that an infinite sourced digraph $(G,o)$ is \textbf{Eulerian} if every vertex other than $o$ has in-degree equal to out-degree, while the source vertex $o$ has out-degree one higher than in-degree. 
 \end{itemize}

Given an infinite sourced digraph $(G,o)$ with sink at infinity, we define a \textbf{proper Gibbs measure on Eulerian paths from $o$ to $\infty$} in $G$ to be a probability measure $\mu$ on the space of paths in $G$ such that the following hold:
\begin{enumerate}
	\item $\mu$ is supported on half-infinite paths in $G$ indexed by $\{0,1,2,\ldots\}$ that start at $o$ and cross every edge of $G$ exactly once.
	\item If $X$ is a random path with law $\mu$, then the conditional distribution of the first $n$ steps of $X$ given the steps the walk takes after time $n$ is a uniform random Eulerian path from $o$ to $X_n$ in the graph with edge set $\{X_{i,i+1} : 0\leq i < n\}$. (Note that this edge set is equal to the set of edges \emph{not} crossed \emph{after} time $n$, and hence is determined by the steps taken by the walk after time $n$.)
\end{enumerate}
The second condition in this definition implies in particular that the sequence of vertices visited by a random path distributed as a proper Gibbs measure on Eulerian paths is partially exchangeable.

\begin{remark}
\label{remark:proper}
The word ``proper'' is used to distinguish this class of Gibbs measures from larger classes which are not necessarily supported on single paths. For example, one could define a Gibbs measure on Eulerian paths in a sourced Eulerian digraph to be any measure obtained as a weak limit (in an appropriate topology) of uniform Eulerian path measures on finite approximations to $G$. A similar definition for \emph{arborescences} is given in detail in Section~\ref{subsec:Gibbs_uniqueness}. This class of Gibbs measures can also be characterised axiomatically using an approach analogous to that of \cite{halberstam2023uniqueness}; we do not pursue these matters here.  In the limit, such measures could be supported on \emph{collections} of paths which \emph{together} cross every edge exactly once, analogously to how Gibbs measures for the uniform spanning tree may be supported on forests rather than trees \cite{Pemantle}. (Such collections can be encoded locally using \emph{stack configurations} along the lines discussed in Section~\ref{subsec:Euler_background}.) 
\end{remark}

We will prove the following reformulation of Theorem~\ref{thm:DF_main}.

\begin{theorem}
\label{thm:DF_Gibbs}
Let $(G,o)$ be a sourced Eulerian digraph. There is at most one proper Gibbs measure on Eulerian paths from $o$ to $\infty$ in $G$. 
\end{theorem}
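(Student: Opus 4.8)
The plan is to translate the problem, via the BEST bijection, into a uniqueness statement for a directed analogue of the wired uniform spanning forest, and then to prove that uniqueness by combining Wilson's algorithm rooted at infinity with an indistinguishability-type argument in the spirit of \cite{hutchcroft2020indistinguishability,hutchcroft2017indistinguishability,halberstam2023uniqueness}.

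\medskip

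\textbf{Step 1: The last-exit arborescence.} First I would observe that a half-infinite path starting at $o$ and crossing every edge of a sourced Eulerian digraph $(G,o)$ exactly once is automatically transient: balancing in- and out-degrees forces the path to visit each vertex $v\neq o$ exactly $\deg^+(v)$ times and to visit $o$ exactly $\deg^+(o)$ times. Hence every vertex $v$ has a well-defined \emph{last-exit edge} $\ell(v)$, the last edge along the path with tail $v$. Writing $t_v$ for the time of the last visit to $v$, the inequalities $t_{\ell(v)^+}\geq t_v+1$ immediately rule out a directed cycle among the edges $\{\ell(v):v\in V\}$, and since each vertex has exactly one outgoing last-exit edge and $G$ is infinite, connected and locally finite, following last-exit edges from any vertex produces a simple forward path that leaves every finite set. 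Thus $\{\ell(v):v\in V\}$ is a spanning forest of $G$ all of whose trees are infinite and oriented toward infinity, i.e., a configuration of exactly the type carried by the (directed, wired) uniform spanning forest; let $\Phi$ denote the resulting measurable map from Eulerian paths to such forests.

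\medskip

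\textbf{Step 2: Reduction via the BEST bijection.} On a finite digraph with source and sink, the BEST bijection identifies Eulerian paths from $o$ to $z$ with pairs consisting of a spanning arborescence oriented toward $z$ together with, for each vertex $v\neq z$, a linear order on the out-edges of $v$ whose last element is the arborescence edge at $v$ (and an arbitrary order on the out-edges of $z$). In particular the number of Eulerian paths with a prescribed last-exit arborescence is independent of that arborescence, so a \emph{uniform} Eulerian path from $o$ to $z$ corresponds to a uniform arborescence toward $z$ decorated by \emph{independent uniform} orderings of the out-edges. Now let $\mu$ be a proper Gibbs measure, $X\sim\mu$, write $G_n$ for the subgraph spanned by the first $n$ edges of $X$ and $A_n$ for the last-exit arborescence of $X|_{[0,n]}$ toward $X_n$ inside $G_n$. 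Applying the Gibbs property together with the previous sentence shows that, conditionally on the steps of $X$ after time $n$, the arborescence $A_n$ is a uniform arborescence toward $X_n$ in $G_n$ and the associated out-edge orderings are conditionally independent and uniform. Since $X$ is transient and crosses every edge exactly once, for each finite edge set $F$ we have $F\subseteq G_n$ and $X_n\notin F$ for all large $n$, so $A_n\to\Phi(X)$ locally almost surely; letting $n\to\infty$ shows that $\Phi_*\mu$ is a Gibbs measure for arborescences oriented to infinity on $(G,o)$ and that $\mu$ is the image of $\Phi_*\mu$ under the fixed measurable map which decorates such a forest with independent uniform out-edge orderings and reconstructs the Eulerian path by the infinite-volume BEST rule. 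Hence $\mu$ is determined by $\Phi_*\mu$, and it suffices to prove that $(G,o)$ carries at most one Gibbs measure on arborescences oriented to infinity.

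\medskip

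\textbf{Step 3: Uniqueness of the directed wired spanning forest, and the main obstacle.} This last statement is the directed counterpart of the fact that the wired uniform spanning forest is the unique Gibbs measure for the uniform spanning forest among measures supported on one-ended trees, and I would establish it following the strategy of \cite{halberstam2023uniqueness}. The Gibbs property allows one to resample the arborescence inside any finite region as a uniform arborescence toward its complement with the same boundary connections; by Wilson's algorithm \cite{wilson1996generating} the conditional law in a finite window is then described by loop-erased trajectories of the uniform-out-edge random walk, and the requirement that every tree be one-ended and escape to infinity should force these to be the loop-erased walks of Wilson's algorithm rooted at infinity, whose law is independent of the exhaustion. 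The delicate point — where essentially all of the work lies — is to turn this heuristic into a proof: one must rule out Gibbs measures in which distinct trees carry nontrivial ``boundary at infinity'' information or merge at infinity, which is precisely the kind of conclusion delivered for the undirected model by the indistinguishability-of-trees technology of \cite{hutchcroft2020indistinguishability,hutchcroft2017indistinguishability} and sharpened in \cite{halberstam2023uniqueness}; adapting that machinery to the directed, sourced setting (and verifying the transience of the relevant walk, which is forced by the existence of the Eulerian path, since the excess unit of out-flow at $o$ must escape to infinity) is the main obstacle. Combined with Steps 1 and 2 this yields Theorem~\ref{thm:DF_Gibbs} and hence Theorem~\ref{thm:DF_main}; running the construction forward — sample the directed wired spanning forest by Wilson's algorithm rooted at infinity, decorate with uniform orderings, read off the path — produces the explicit description of Theorem~\ref{thm:DF_explicit}.
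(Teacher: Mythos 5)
Your Steps 1 and 2 reproduce the paper's own reduction: the last-exit arborescence of a transient Eulerian path, the BEST correspondence between uniform Eulerian paths and uniform in-arborescences decorated with independent uniform stack orderings, and the conclusion that a proper Gibbs measure on Eulerian paths pushes forward to a Gibbs measure on spanning in-arborescences rooted at infinity which determines it. But the proof is not complete, because the two places where genuine work is required are left as gestures. First and foremost, the uniqueness statement you reduce to --- that such a Gibbs measure on in-arborescences is unique --- is exactly Theorem~\ref{thm:one_end_uniqueness}, the technical heart of the paper, and you explicitly defer it (``the delicate point --- where essentially all of the work lies''). The paper proves it by a three-stage implementation of Wilson's algorithm on the finite approximations: the first stage generates the futures of the vertices outside a window $V_m$, whose intersection $Z_{m}$ with the window is shown to vanish weakly as $m\to\infty$ \emph{precisely because} the components are one-ended (finite pasts), after which the remaining loop-erased walks converge to those of Wilson's algorithm rooted at infinity. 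Without carrying out such an argument your Step 3 is a restatement of the problem, not a proof. Relatedly, the statement you reduce to, as literally phrased (``at most one Gibbs measure on arborescences oriented to infinity''), omits the one-endedness restriction; the unrestricted statement is false even in the undirected setting (free versus wired spanning forests on a regular tree both arise as limits of finite approximations), so you must retain the hypothesis that every component has finite pasts, which for last-exit arboreta is the content of Lemma~\ref{lem:entrance_exit_basic_properties} --- note that your Step 1 only checks forward escape, not finiteness of pasts.

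The second gap is transience. You assert that transience of the random walk on $G$ is ``forced by the existence of the Eulerian path, since the excess unit of out-flow at $o$ must escape to infinity.'' For directed graphs there is no energy/flow criterion that makes this a proof, and transience is not a side remark: it is needed for the infinite loop-erased walk, hence for Wilson's algorithm rooted at infinity and for the well-definedness of the limit measure $\mathsf{WUA}_G$ (Proposition~\ref{prop:wired_well_defined}), on which the whole uniqueness argument rests. The paper supplies this as Lemma~\ref{lem:transience}, via a path-reversal comparison of return probabilities on $G$ and on the edge-reversed digraph $G^\leftarrow$, showing $\mathbf{P}_o(\text{return})\leq \operatorname{in-deg}(o)/\operatorname{out-deg}(o)<1$. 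So your outline follows the same route as the paper, but both the transience input and the uniqueness-of-one-ended-Gibbs-measures theorem are named rather than proven, and these are exactly the nontrivial parts.
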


\begin{proof}[Proof of Theorem~\ref{thm:DF_main} given Theorem~\ref{thm:DF_Gibbs}]
%
Observe that if  $X$ is a partially exchangeable process with non-random transition numbers $M(u,v)$, we can turn it into a proper Gibbs measure on Eulerian paths on the sourced Eulerian digraph with vertex set $\{u\in S: \text{$u$ visited a.s.\ by $X$}\}$ and with $M(u,v)$ directed edges from $u$ to $v$ for each $u,v\in S$: This is done by choosing a random order for the process to cross the $M(u,v)$ different edges from $u$ to $v$ over the course of its transitions from $u$ to $v$. Thus, Theorem~\ref{thm:DF_Gibbs} is equivalent to the statement that for each assignment of an initial vertex and transition numbers between the points of some countable set $S$, there is at most one partially-exchangeable process that takes these transition numbers almost surely. This implies that every transient partially-exchangeable process with non-random initial state and non-random transition numbers has trivial partially-exchangeable $\sigma$-algebra, since otherwise we could condition on a non-trivial partially-exchangeable event to get a different partially-exchangeable process with the same transition numbers.
The general case of the theorem follows by conditioning on the transition numbers, which preserves partial exchangeability.
\end{proof}

As mentioned above, the proof of Theorem~\ref{thm:DF_Gibbs} establishes not only that proper Gibbs measures are unique, but yields an explicit description  of these measure in terms of a directed analogue of the wired uniform spanning forest (see Theorem~\ref{thm:DF_explicit}). This construction makes sense on \emph{any} sourced Eulerian digraph, but does not always yield an Eulerian path: As in Remark~\ref{remark:proper}, it may instead yield one half-infinite path together with a family of bi-infinite paths which \emph{together} cross each edge exactly once. It remains an interesting open problem to characterize those sourced Eulerian digraphs that admit at least one proper Gibbs measure on Eulerian paths (which is necessarily unique by Theorem~\ref{thm:DF_Gibbs}).




\subsection{Extreme points and mixing representations}
\label{subsec:mixtures}

We now explain how our main results may be interpreted in terms of convex geometry and mixture representations. We begin by characterising the extreme points of the set of transient partially exchangeable processes:

\begin{corollary}
\label{cor:extreme_points}
Let $S$ be a non-empty countable set, and let $\mu$ be a transient, partially exchangeable measure $\mu$ on $S^\mathbb{N}$. The following are equivalent:
\begin{enumerate}
\item $\mu$ is extremal in $\mathscr{P}(S)$.
\item $\mu$ is supported on a set of sequences with constant initial vertex and constant transition numbers.
\item $\mu$ is equal to the measure on sequences associated to the unique proper Gibbs measure on Eulerian paths on some sourced Eulerian digraph defined on a subset of $S$.
\end{enumerate}
\end{corollary}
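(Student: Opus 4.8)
The plan is to derive the corollary from Theorems~\ref{thm:DF_main} and~\ref{thm:DF_Gibbs} together with standard facts about the convex structure of $\mathscr{P}(S)$, organised as a cycle of implications $(1)\Rightarrow(2)\Rightarrow(3)\Rightarrow(1)$. The one abstract input needed is the equivalence, mentioned above, that a measure $\mu\in\mathscr{P}(S)$ is extremal if and only if $\mathcal{P}(S)$ is $\mu$-trivial. To prove this cleanly I would first record that, for every $n$ and every partially exchangeable $\nu$, the regular conditional distribution of a $\nu$-random sequence given $\mathcal{P}_n(S)$ is one and the same kernel $\kappa_n$, independent of $\nu$: it fixes the initial state $x_0$, the tail $(x_m)_{m\ge n}$ and the counts $(m_n(u,v))_{u,v}$, and distributes $(x_1,\dots,x_{n-1})$ uniformly over the finite set of length-$n$ paths from $x_0$ to $x_n$ realising those counts (this is essentially a restatement of the definition of partial exchangeability, using that the initial state together with the transition counts determines $x_n$). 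Given this: if $A\in\mathcal{P}(S)$ with $\mu(A)\in(0,1)$ then, since $A$ belongs to every $\mathcal{P}_n(S)$ and $\kappa_n(\omega,\cdot)$ is carried by the $\mathcal{P}_n(S)$-fibre through $\omega$, both $\mu(\cdot\mid A)$ and $\mu(\cdot\mid A^{c})$ still have $\kappa_n$ as their conditional law given $\mathcal{P}_n(S)$ for all $n$, hence are partially exchangeable, so $\mu$ is a nontrivial mixture of distinct elements of $\mathscr{P}(S)$; conversely, if $\mu=t\mu_1+(1-t)\mu_2$ with $t\in(0,1)$ and $\mu_i\in\mathscr{P}(S)$, then $\mu_1\ll\mu$ and the density $\diff\mu_1/\diff\mu$ agrees $\mu$-a.s.\ with the $\mathcal{P}_n(S)$-measurable density of $\mu_1|_{\mathcal{P}_n(S)}$ against $\mu|_{\mathcal{P}_n(S)}$ for every $n$ (because $\kappa_n$ is simultaneously the conditional law of $\mu$ and of $\mu_1$ given $\mathcal{P}_n(S)$), so it is measurable with respect to the $\mu$-completion of $\mathcal{P}(S)=\bigcap_n\mathcal{P}_n(S)$ and hence $\mu$-a.s.\ constant, giving $\mu_1=\mu$.

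With the equivalence in hand, $(1)\Rightarrow(2)$ is immediate: the initial state $X_0$ and each transition number $M(u,v)$ are $\mathcal{P}(S)$-measurable (finite by transience, and $\mathcal{P}_n(S)$-measurable for every $n$ since $M(u,v)=m_n(u,v)+\#\{i\ge n:(X_i,X_{i+1})=(u,v)\}$), so $\mu$-triviality of $\mathcal{P}(S)$ forces them to be $\mu$-a.s.\ constant. For $(2)\Rightarrow(3)$, write $o$ for the a.s.\ value of $X_0$, write $M(u,v)$ for the a.s.\ transition numbers, let $S'\subseteq S$ be the (deterministic) set of states visited $\mu$-a.s., and let $G$ be the digraph on $S'$ with $M(u,v)$ edges from $u$ to $v$. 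I would then check that $(G,o)$ is a sourced Eulerian digraph: it is connected (being the vertex and edge set traced by a single path), locally finite (each state is visited finitely often), and infinite (a transient infinite sequence visits infinitely many states); moreover a transient path from $o$ enters and leaves every vertex $u\neq o$ equally often and leaves $o$ exactly once more than it enters, which is precisely the Eulerian degree condition. As in the proof of Theorem~\ref{thm:DF_main} given Theorem~\ref{thm:DF_Gibbs}, assigning to the successive $u\to v$ transitions a uniformly random ordering of the $M(u,v)$ parallel edges turns $\mu$ into a probability measure on Eulerian paths from $o$ to $\infty$ in $G$ satisfying the Gibbs property; this measure exists (by the construction), is therefore \emph{the} unique proper Gibbs measure on $G$ by Theorem~\ref{thm:DF_Gibbs}, and its pushforward to vertex sequences is $\mu$, which is $(3)$.

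Finally, for $(3)\Rightarrow(1)$, suppose $\mu$ is the vertex-sequence law of the unique proper Gibbs measure on a sourced Eulerian digraph $(G,o)$ with $V(G)\subseteq S$, and write $\mu=t\mu_1+(1-t)\mu_2$ with $t\in(0,1)$ and $\mu_i\in\mathscr{P}(S)$. Since $\mu_i\ll\mu$, $\mu_i$-almost every sequence is a path in $G$ crossing each edge exactly once, so $\mu_i$ is transient, has constant initial state $o$ and constant transition numbers equal to the edge multiplicities of $G$, and visits exactly the vertex set of $G$ almost surely; applying $(2)\Rightarrow(3)$ to $\mu_i$ therefore produces a proper Gibbs measure on the \emph{same} digraph $(G,o)$, which by the uniqueness statement of Theorem~\ref{thm:DF_Gibbs} coincides with the Gibbs measure underlying $\mu$, forcing $\mu_i=\mu$ for $i=1,2$. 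I expect the only point requiring genuine care to be the bookkeeping behind the extremality$\,\Leftrightarrow\,$triviality equivalence — in particular the verification that conditioning on a partially exchangeable event preserves partial exchangeability — while everything else is routine once Theorems~\ref{thm:DF_main} and~\ref{thm:DF_Gibbs} are available.
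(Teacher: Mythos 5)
Your proposal is correct and follows essentially the same route as the paper: the cycle $(1)\Rightarrow(2)\Rightarrow(3)\Rightarrow(1)$, with $(1)\Rightarrow(2)$ via the extremality$\,\Leftrightarrow\,$triviality of $\mathcal{P}(S)$ equivalence, $(2)\Rightarrow(3)$ via the random edge-ordering construction from the proof of Theorem~\ref{thm:DF_main} together with the uniqueness in Theorem~\ref{thm:DF_Gibbs}, and $(3)\Rightarrow(1)$ by uniqueness applied to the components of a putative convex decomposition. The only difference is that you spell out details (the common conditional kernel $\kappa_n$, the martingale argument, the verification that the transition counts define a sourced Eulerian digraph) that the paper treats as immediate.
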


\begin{proof}[Proof of Corollary~\ref{cor:extreme_points}]
The implication $(1)\Rightarrow (2)$ is trivial since conditioning a transient partially exchangeable process on its initial vertex and transition numbers preserves partial exchangeability. The implication $(2) \Rightarrow (3)$ is the content of Theorem~\ref{thm:DF_Gibbs}. Finally, the implication $(3)\Rightarrow (1)$ follows from the uniqueness statement that is part of $(3)$: if the measure were not extremal then we could write it as the convex combination of two distinct partially exchangeable processes which would have the same deterministic initial vertex and transition numbers, contradicting the claimed uniqueness.
\end{proof}

Finally, we have the following theorem on extreme points and mixture representations for arbitrary partially exchangeable processes. 

\begin{corollary}
\label{cor:mixture_representations}
Let $S$ be a non-empty countable set. The set $\operatorname{ext}\mathscr{P}(S)$ of extremal elements of the set of partially exchangeable random sequences in $S$ can be written as the union of the following three sets:
\begin{enumerate}
	\item (Recurrent) The set of laws of trajectories of recurrent Markov chains on subset of $S$ with non-random initial state.
	\item (Transient) The set of proper Gibbs measures on Eulerian paths on  sourced Eulerian digraphs defined on subsets of $S$.
	\item (Mixed) The set of laws of processes formed by first following a uniform random Eulerian path on a finite Eulerian digraph defined on a subset of $S$ with a source and a sink, followed by the trajectory of a recurrent Markov chain defined on a disjoint subset of $S$ with non-random initial state.
\end{enumerate}
Moreover, every measure in $\mathscr{P}(S)$ can be written \emph{uniquely} as a mixture of measures in $\operatorname{ext}\mathscr{P}(S)$.
\end{corollary}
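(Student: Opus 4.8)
The plan is to prove Corollary~\ref{cor:mixture_representations} by first establishing the partition of $\operatorname{ext}\mathscr{P}(S)$ into the three listed families, and then deriving the unique mixture representation from general Choquet-type considerations together with the measurability of the decomposition map. I would organize the argument around a single measurable statistic of a partially exchangeable sequence $X$ that records, for each state $u\in S$, whether $u$ is visited infinitely often; since the event $\{u\text{ visited i.o.}\}$ lies in the tail sigma-algebra and hence in $\mathcal P(S)$, the set $R=R(X)=\{u\in S: u\text{ visited i.o.}\}$ is a $\mathcal P(S)$-measurable random subset of $S$. On an extremal measure $\mu$, triviality of $\mathcal P(S)$ forces $R$ to be almost surely constant, so $\operatorname{ext}\mathscr{P}(S)$ partitions according to the deterministic value of $R$.

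The first main step is to analyze an extremal $\mu$ with $R=R(X)$ equal to a fixed set (possibly empty, possibly all of $S$). Here I would argue that the trajectory almost surely eventually stays inside $R$ and visits every vertex of $R$ infinitely often, while the ``transient part'' $S\setminus$-worth of the trajectory (the portion before the walk is absorbed into the recurrent class, together with the finitely-visited states) is traversed only finitely often. One then shows that conditionally on the (finite) initial excursion, the process restricted to times after absorption is an extremal recurrent partially exchangeable process on $R$, hence by the Diaconis--Freedman recurrence theorem (quoted in the excerpt) a recurrent Markov chain on $R$ with a deterministic starting point; and that the initial excursion, which traverses a finite multiset of edges among states in $S\setminus R$ together with edges entering $R$, has the law of a uniform Eulerian path on a finite digraph with a source (the deterministic $X_0$) and a sink (the entry point into $R$). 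The three cases $R=\emptyset$, $R=S$ modulo the transient part being empty, and the genuinely mixed case produce exactly the three families in the statement. Applying Theorem~\ref{thm:DF_Gibbs} (via Corollary~\ref{cor:extreme_points}) identifies the purely transient extremal measures with proper Gibbs measures on Eulerian paths. The converse inclusions --- that each measure in families (1)--(3) is genuinely extremal --- follow as in the proof of Corollary~\ref{cor:extreme_points}: a nontrivial convex decomposition of such a measure would produce two distinct partially exchangeable processes sharing the same value of all $\mathcal P(S)$-measurable statistics (initial state, transition numbers of the transient part, and the transition counts / local times of the recurrent part), contradicting uniqueness of the recurrent representation on $R$ and of the proper Gibbs measure off $R$.

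For the uniqueness of the mixture representation of an arbitrary $\mu\in\mathscr{P}(S)$, I would exhibit an explicit measurable map $\Phi$ from $S^{\mathbb N}$ to $\operatorname{ext}\mathscr{P}(S)$ --- concretely, $\Phi(x)$ is the extremal measure determined by the data $(R(x),\,X_0,\,\text{transition numbers of the transient excursion},\,\text{the recurrent class structure on }R(x))$ --- and check that $\mathcal P(S)$ is, up to null sets and for every $\mu$, generated by $\Phi$. The disintegration of $\mu$ over $\Phi$ then writes $\mu=\int \Phi(x)\,d\mu(x)$ as a mixture of extremal measures; uniqueness follows because any representing measure on $\operatorname{ext}\mathscr{P}(S)$ must agree with the pushforward of $\mu$ under $\Phi$ on the (countably generated) sigma-algebra separating the extreme points, which is a standard argument once one knows distinct extremal measures are separated by $\mathcal P(S)$-events. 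The main obstacle, I expect, is the bookkeeping in the mixed case: one must carefully set up the (random) splitting time at which the trajectory is absorbed into its recurrent class, verify that the ``before'' and ``after'' pieces are conditionally independent given the splitting data and have the claimed laws (uniform Eulerian path, respectively recurrent Markov chain), and confirm that all the conditioning sigma-algebras involved are genuinely contained in $\mathcal P(S)$ so that triviality on extremal measures can be applied. Handling the edge cases where the recurrent class is reached ``at infinity'' without a well-defined finite absorption time, or where infinitely many transient states are visited before absorption, requires some care but should follow from transience of the non-recurrent part and local finiteness arguments already developed for Theorem~\ref{thm:DF_Gibbs}.
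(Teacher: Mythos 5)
Your overall route coincides with the paper's: classify the extreme points into the three families using the Diaconis--Freedman recurrent theorem together with Corollary~\ref{cor:extreme_points}, then obtain existence of the mixture by Choquet/disintegration and uniqueness by a separation argument. However, there is a genuine gap at the heart of the mixed case. You assume the trajectory is absorbed into the recurrent set $R$ after a finite excursion through finitely many transient states, and you explicitly defer exactly this point (absorption ``at infinity'', infinitely many transient states before absorption) to ``transience of the non-recurrent part and local finiteness arguments already developed for Theorem~\ref{thm:DF_Gibbs}''. Those arguments do not give it: a priori an extremal partially exchangeable process could interleave visits to recurrent states with excursions to ever-new transient states, in which case no finite absorption time exists and the ``finite Eulerian head followed by a recurrent tail'' structure breaks down. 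The paper closes this with a short dedicated argument that you are missing: for a fixed finite path $\gamma=(s_0,\ldots,s_k)$ with $s_0,s_k$ recurrent and $s_1,\ldots,s_{k-1}$ transient, partial exchangeability makes the probability of following $\gamma$ immediately after the $n$th visit to $s_0$ independent of $n$, while transience of $s_1$ forces this probability to tend to $0$ as $n\to\infty$; hence it is $0$, so almost surely no transient state is ever visited after a recurrent one. Granted this lemma, every transient visit precedes the first visit to a recurrent state, which occurs at a finite time, so the transient head is finite and deterministic under extremality, and your splitting-time bookkeeping becomes unnecessary. As written, your proposal asserts rather than proves the key structural claim.

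On uniqueness of the mixture representation, your disintegration over a measurable map $\Phi$ is workable in principle, but the ingredient you call ``the recurrent class structure on $R(x)$'' must be made concrete, and checking that $\mathcal{P}(S)$ is generated by $\Phi$ up to null sets under every $\mu$ is a nontrivial claim essentially equivalent to the decomposition theorem itself. The paper sidesteps this: the extremal component is identified from a single trajectory by the transition counts $M(u,v)$ together with the empirical transition frequencies $Q(u,v)=\lim_n \#\{i\le n: X_i=u, X_{i+1}=v\}/\#\{i\le n: X_i=u\}$, which exist almost surely on the infinitely-visited states by the ergodic theorem applied to the recurrent Markov chains in the mixture; uniqueness then follows because any representing measure is determined by the joint law of $(M,Q)$. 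If you keep your more abstract route, you should at least replace the vague ``recurrent class structure'' by these frequencies, at which point your argument and the paper's essentially coincide.
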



\begin{proof}[Proof of Corollary~\ref{cor:mixture_representations}]
We first prove the claim about extreme points. The recurrent case is handled by the original paper of Diaconis and Freedman \cite{MR556418}, while the transient case is handled by Corollary~\ref{cor:extreme_points}. We now treat the mixed case, in which some states are transient and others are recurrent.  We first note that, with probability one, there is no transient state that is visited after a recurrent state. Indeed, if $\gamma = (s_0,\ldots,s_k)$ is a finite sequence such that $s_0$ and $s_k$ are recurrent and $s_1,\ldots,s_{k-1}$ are transient then the probability that the process follows the path $\gamma$ immediately after its $n$th visit to $s_0$ is independent of $n$ by partial exchangeability, but must converge to zero as $n\to\infty$ since $s_0$ is recurrent and $s_1$ is transient. This implies that no such path is ever taken a.s., so that no transient state is visited after a recurrent state a.s.\ as claimed. Thus, an extremal partially exchangeable process with both recurrent and transient states must first visit some finite deterministic collection of transient states, with deterministic crossing numbers, then spend all subsequent steps at recurrent states, with a deterministic choice of the first recurrent state visited. The law of the initial transient part of the process must be a uniform random Eulerian path on a finite digraph with source and sink by partial exchangeability, while the law of the recurrent part of the process is itself an extremal partially exchangeable process, hence the law of a recurrent Markov chain with non-random initial state by the Diaconis--Freedman theorem.

We now prove the claim about mixture representations. The fact that every measure in $\mathscr{P}(S)$ may be written as a mixture of measures in $\operatorname{ext}\mathscr{P}(S)$ follows by Choquet's theorem \cite{phelps2001lectures} since $\mathscr{P}(S)$ is convex and weakly compact if we give $S$ the topology making it the one point compactification of $S\setminus \{s_0\}$ for some $s_0\in S$. (Alternatively, the mixture representation is also given by conditioning on the partially-exchangeable sigma-algebra.) The fact that the mixture representation is \emph{unique} follows since it can be computed from the joint law of the transition numbers $(M(u,v)=\#\{$transitions from $u$ to $v))_{u,v\in S}$
and the transition frequencies
\[
Q(u,v) = \lim_{n\to\infty} \frac{\#\{0\leq i \leq n: X_i = u,X_{i+1}=v\}}{\#\{0\leq i \leq n: X_i = u\}},
\]
which are well-defined a.s.\ for every pair of states that are visited infinitely often by the process (as follows by applying the ergodic theorem to the recurrent Markov chains that appear in the mixture representation).
\end{proof}

\begin{remark}
In the terminology of convex geometry, the fact that every point in $\mathscr{P}(S)$ admits a \emph{unique} integral representation in terms of the extreme points of $\mathscr{P}(S)$ means that $\mathscr{P}(S)$ is a \emph{Choquet simplex}.
\end{remark}

\section{Background}

\subsection{Stacks, arborescences, and the BEST theorem}
\label{subsec:Euler_background}

\noindent \textbf{Encoding paths with stacks.} 
Given a digraph $G$, a \textbf{stack configuration} $S$ on $G$ is an assignment of each vertex $v$ of $G$ to a (finite or infinite) list of edges $S(v)=(S_i(v))_{i=1}^{n_v}$ in $E^\rightarrow_v$, which we call the \emph{stack at $v$}, where $n_v\in \{0,1,2,\ldots\}\cup \{\infty\}$; if $n_v=0$ then the stack at $v$ is the empty list. We denote the space of all stack configurations on $G$ by $\mathcal{S}=\mathcal{S}(G)$, and equip this space with the prodict topology. We say that a stack configuration $S$ is \textbf{at the top} of a stack configuration $S'$ if the list $S(v)$ is an initial segment of the list $S'(v)$ for each $v\in V$. If the stack configuration $S$ is at the top of the stack configuration $S'$, we define $S'-S$ by deleting the initial segment $S(v)$ from each of the lists $S'(v)$. Given an edge $e\in E$, we define a stack configuration $S^e$ by taking $S^e(e^-)$ to be the list of length $1$ with single entry equal to $e$ and taking $S^e(v)$ to be the empty list for every $v\neq e^-$, and say that $e$ is at the top of a stack configuration $S$ if $S^e$ is at the top of $S$, i.e., if $e$ is the first element in the stack at its tail; when this holds, we define $S-e:=S-S^e$.
Given a transient path $P$ in $G$, we define a stack configuration $S^P$ in $G$ by, for each vertex $v\in V$, letting $S^P(v)$ be the list of edges in $E^\rightarrow_v$ in the order that they are crossed by $P$, with edges that are crossed more than once appearing multiple times in the list as appropriate. We say that a path $P$ is at the top of a stack configuration $S$ if $S^P$ is, in which case we define $S-P:=S-S^P$.

For each stack configuration $S$ and initial vertex $v_0\in V$, there is a unique maximal path $P(v_0,S)$ starting at $v_0$ that is at the top of $S$, in the sense that any other path that starts at $v_0$ and it at the top of $S$ is an intial segment of $P(v_0,S)$. The path $P(v_0,S)$ is constructed from $S$ by ``following the stacks and throwing away each edge from the top of the stack after it is used'', a procedure we now define formally.
 The construction will simultaneously construct a sequence of vertices $v_0,v_1,\ldots$ and stack configurations $S=S^0,S^1,\ldots$, with $v_i$ equal to the $i$th vertex visited by the path. Initialize the algorithm by setting $S^0=S$ and setting $P^0$ to be the empty path.  
Suppose we have run the algorithm for $i$ steps, yielding a vertex $v_i$, a stack configuration $S^i$, and a path $P^i=(P_1,\ldots,P_i)$. If the stack above $v_i$ in $S^i$ is empty, we terminate the algorithm; we have already found the maximal path $P^i$. Otherwise, we set $P_{i+1}$ to be the edge on the top of the stack $S^i(v_i)$, define $P^{i+1}$ by appending $P_{i+1}$ to the end of $P^i$, set $v_{i+1}=P_{i+1}^+$, and set $S^{i+1}=S^i - P_{i+1}$. Note that if $P$ is a path starting at a vertex $v_0$ then $P=P(v_0,S^P)$, so that every path can be recovered from its associated stack and initial vertex. On the other hand, not every stack configuration $S$ encodes a single path: It is possible (and, indeed, typical) that no matter which vertex we start at, the path $P(v_0,S)$ terminates before every stack has been emptied.

\begin{remark}
Markov chains can be sampled using stack configurations by taking the stack above each vertex $v$ to be an infinite list of i.i.d.\ random elements of $E^\rightarrow_v$; the law of each such random element determines the transition probabilities of the Markov chain. This perspective on Markov chains is often used to prove the validity of Wilson's algorithm for sampling uniform spanning trees and arborescences, which we will discuss later in the paper. The paths obtained from other interesting laws on stack configurations have also been studied under the name of the \emph{rotor-router model} \cite{holroyd2008chip,chan2021infinite}.
\end{remark}


\noindent \textbf{Arborescences.} 
Given a digraph $A$ and a vertex $v$ of $A$, we say that $A$ is an \textbf{in-arborescence rooted at $v$} if every vertex in $V\setminus \{v\}$ has out-degree $1$, $v$ has out-degree zero, and every vertex in $V \setminus \{v\}$ is the initial vertex of a path in $A$ ending at $v$ (which is necessarily formed by following the unique outgoing edges from each vertex until $v$ is reached).
We say that an infinite digraph $A$ is an in-arborescence 
 \textbf{rooted at infinity} if every vertex has out-degree $1$ 
  and the tree formed by forgetting the orientations of the edges of $A$ is connected. If $A$ satisfies all properties required of an in-arborescence rooted at infinity 
   other than connectivity, we call it an \textbf{in-arboretum} 
    rooted at infinity. Given a digraph $G=(V,E)$, we define a \textbf{spanning in-arborescence of $G$}, 
    either rooted at $v\in V$ or at infinity, as a subdigraph of $G$ which is an in-arborescence, rooted at either $v$ or infinity as appropriate, and contains all the vertices of $G$. Spanning in-arboreta of $G$ rooted at infinity are defined similarly.

\medskip

\noindent \textbf{Arborescences from paths.}
Given a digraph $G=(V,E)$ and a transient path $P$ in $G$, we define the \textbf{last-exit-arboretum} $\mathscr{L}(P)$ of $P$
to be the digraph with vertex set equal to the set of vertices visited by $P$ and where an edge $e\in E$ is included in the edge set of $\mathscr{L}(P)$ if and only if it is the edge traversed by $P$ as it exits $e^-$ for the last time. 
(If $P$ is finite, no edge pointing out of the last vertex visited by $P$ is included.) We will often refer to the last-exit arboretum as the last-exit \emph{arborescence} in situations when we know it to be connected.
%
%
%

\medskip

\noindent \textbf{Ends and the past.} 
Given an in-arboretum $A=(V,E)$, we define the \textbf{past} $\mathrm{Past}_{A}(u)$ of a vertex $u\in V$ to be the set of vertices $x\in V$ such that there exists a directed path in $A$ starting at $x$ and ending at $u$, and the \textbf{future} $\mathrm{Future}_{A}(u)$ of a vertex $u$ to be the set of vertices $x\in V$ such that $u\in \mathrm{Past}_{A}(x)$.
We say that an infinite in-arborescence rooted at infinity $A$ is \textbf{one-ended} if the past $\mathrm{Past}_{A}(u)$ of each vertex $u\in V$ is finite. 

\medskip

The last-exit arboretum has the following important property.

\begin{lemma}
\label{lem:entrance_exit_basic_properties}
	Let $G=(V,E)$ be a digraph and let $P$ be a path in $G$. If $P$ is infinite, transient, and indexed by $\{0,1,2,\ldots\}$ then the last-exit-arboretum $\mathscr{L}(P)$ is an in-arboretum rooted at infinity, all of whose components are one-ended.
\end{lemma}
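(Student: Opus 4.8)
The plan is to run the whole argument through one bookkeeping device: the \emph{last-exit time} function of the path $P=(\gamma_i)_{i\geq 0}$. For each vertex $v$ visited by $P$, set $\tau(v):=\max\{t\geq 0:\gamma_t=v\}$, which is well defined because $P$ is transient; and since $P$ is infinite the step $\gamma_{\tau(v),\tau(v)+1}$ out of $v$ exists and is precisely the last-exit edge of $v$. Thus the edge set of $\mathscr{L}(P)$ is $\{\gamma_{\tau(v),\tau(v)+1}:v \text{ visited by } P\}$, and since distinct vertices have last-exit edges with distinct tails, the assignment $v\mapsto(\text{last-exit edge of }v)$ is a bijection from the visited vertices onto $E(\mathscr{L}(P))$. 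In particular every vertex of $\mathscr{L}(P)$ has out-degree exactly one. (Also $P$ visits infinitely many vertices, as otherwise some vertex would be visited infinitely often, contradicting transience, so $\mathscr{L}(P)$ is infinite.)

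The single fact driving everything else is a monotonicity statement: if $e$ is an edge of $\mathscr{L}(P)$, i.e.\ the last-exit edge of $e^-$, then $P$ is at $e^+$ at time $\tau(e^-)+1$, so $\tau(e^+)\geq\tau(e^-)+1>\tau(e^-)$. Two consequences follow at once. First, $\mathscr{L}(P)$ has no directed cycle, since a directed cycle $u_1\to u_2\to\cdots\to u_k\to u_1$ would force the impossible chain $\tau(u_1)<\tau(u_2)<\cdots<\tau(u_k)<\tau(u_1)$. Combined with the out-degree-one property, this already identifies $\mathscr{L}(P)$ as an in-arboretum rooted at infinity: in any digraph all of whose vertices have out-degree one, an undirected cycle must be a directed cycle (traversing such a cycle, if its edges are not all oriented the same way around then some vertex is the tail of two of its edges, contradicting out-degree one), so the absence of directed cycles makes the underlying undirected graph a forest; and following out-edges from any vertex never terminates (out-degree one) and never repeats a vertex (by the monotonicity of $\tau$), exhibiting each component as infinite.

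Second, the same monotonicity gives one-endedness. If $w\in\mathrm{Past}_{\mathscr{L}(P)}(u)$, pick a directed path $w=w_0\to w_1\to\cdots\to w_m=u$ in $\mathscr{L}(P)$; applying the monotonicity of $\tau$ along each of its edges gives $\tau(w)=\tau(w_0)\leq\tau(w_m)=\tau(u)$, so $w=\gamma_{\tau(w)}\in\{\gamma_0,\gamma_1,\ldots,\gamma_{\tau(u)}\}$. This last set is finite, so $\mathrm{Past}_{\mathscr{L}(P)}(u)$ is finite for every vertex $u$, which is exactly the assertion that every component is one-ended.

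I do not anticipate a serious obstacle: the argument is essentially a one-parameter monotonicity in $\tau$ together with the elementary structure theory of out-degree-one (``functional'') digraphs. The only point that calls for a moment's care is deducing that the underlying undirected graph is a forest from the absence of directed cycles; this is precisely where the out-degree-one hypothesis is used a second time, via the observation that an undirected cycle in a functional digraph is necessarily a directed cycle.
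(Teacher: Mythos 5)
Your proof is correct and follows essentially the same approach as the paper: the key observation that last-exit edges strictly increase the (last-)visit time, so the past of any vertex is contained in the finite set of vertices visited by time $\tau(u)$, is exactly the paper's one-line argument for finiteness of pasts. You additionally spell out the verification that $\mathscr{L}(P)$ is an in-arboretum (out-degree one, no cycles), which the paper leaves implicit; this is a sound and welcome elaboration rather than a different method.
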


\begin{proof}
In order for a vertex $u$ to be in the past of a vertex $v$, it must be visited by $P$ before $v$. The claim follows  since $P$ visits at most finitely many vertices before it visits any specific vertex along its path.
\end{proof}

\noindent \textbf{The BEST theorem.} As we have seen in Lemma~\ref{lem:entrance_exit_basic_properties}, the last-exit edges of a finite path always define an in-arborescence rooted at the final vertex of the path. Thus, in order for a finite stack configuration to encode a finite path, the collection of edges at the end of each list in the stack must form an in-arborescence.
The BEST theorem, due to de Bruijn, Ehrenfest, Smith, and Tutte \cite{tutte1941unicursal,van1987circuits}, builds on this observation to give a simple characterisation of stack configurations encoding Eulerian paths. See e.g.\ \cite{stanley1999enumerative} for a modern treatment.

\begin{theorem}[BEST]
Let $(G,o,z)$ be a finite Eulerian digraph with source $o$ and sink $z$. A stack configuration $S$ on $G$ represents an Eulerian path from $o$ to $z$ if and only if the following hold:
\begin{enumerate}
	\item For each vertex $v$, the list $S(v)$ includes each element of $E^\rightarrow_v$ exactly once.
	\item The set of edges that are the bottom of the stacks other than at $z$ forms a spanning in-arborescence of $G$ rooted at $z$.
\end{enumerate}
In particular, the number of Eulerian paths from $o$ to $z$ is equal to
\[
|\mathcal{A}_z^{\operatorname{in}}(G)|\cdot \operatorname{out-deg}(z)! \prod_{v\in V \setminus\{z\}} (\operatorname{out-deg}(v)-1)!,
\]
where $\mathcal{A}_z^{\operatorname{in}}(G)$ denotes the set of spanning in-arborescences of $G$ rooted at $z$.
\end{theorem}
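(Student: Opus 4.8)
The plan is to prove the stated characterization of stack configurations and then read off the enumeration formula as a corollary. For the ``only if'' direction, suppose $S = S^P$ arises from an Eulerian path $P$ from $o$ to $z$. Property (1) is immediate: since $P$ crosses each edge exactly once, the list $S^P(v)$ of edges out of $v$, taken in the order they are crossed by $P$, contains each element of $E^\rightarrow_v$ exactly once. For property (2), observe that the \emph{bottom} entry of the stack $S^P(v)$ is precisely the edge along which $P$ leaves $v$ for the last time, so the edges lying at the bottom of the stacks at vertices other than $z$ are exactly the edges of the last-exit arboretum $\mathscr{L}(P)$. Since $P$ is a finite path ending at $z$ that visits every vertex of $G$ (being Eulerian in a connected digraph), $\mathscr{L}(P)$ is a spanning in-arborescence of $G$ rooted at $z$, as recorded in Lemma~\ref{lem:entrance_exit_basic_properties} and in the discussion preceding the theorem.

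For the ``if'' direction, assume $S$ satisfies (1) and (2), write $T$ for the spanning in-arborescence of (2), and let $P = P(o,S)$ be the maximal path from $o$ at the top of $S$. By (1) each edge occurs exactly once among the stacks, so $P$ crosses each edge at most once and is in particular finite; let $w$ be its final vertex. Maximality of $P$ means the stack at $w$ is empty at termination, so $P$ departs from $w$ exactly $\operatorname{out-deg}(w)$ times. Comparing, for each vertex, the number of departures against the number of arrivals along the walk $P$ --- these agree except at the two endpoints $o$ and $w$, where they differ by $\pm 1$ --- and bounding arrivals and departures by the in- and out-degrees via (1), the Eulerian degree conditions force $w = z$ (any other $w$ yields $\operatorname{out-deg}(w) \le \operatorname{out-deg}(w) - 1$; the degenerate case $o = z$ is handled the same way). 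It remains to show that $P$ crosses every edge. Let $H$ be the digraph of edges \emph{not} crossed by $P$; a short degree count, again using the Eulerian conditions and the fact that $P$ runs from $o$ to $z$, shows that $H$ has in-degree equal to out-degree at every vertex. If $H$ were non-empty, pick a vertex $v$ with an out-edge in $H$; since $P$ consumes edges from the top of each stack, a vertex with an uncrossed out-edge must have its \emph{bottom} stack-edge uncrossed, and by (2) this bottom edge (for $v \ne z$) is the unique edge of $T$ leaving $v$. Hence the $T$-parent of $v$ has an incoming $H$-edge and so, by balancedness of $H$, also an outgoing one. Iterating up the finite $T$-path from $v$ to the root $z$ shows $z$ has an outgoing $H$-edge, contradicting the emptiness of the stack at $z$. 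Therefore $H = \emptyset$ and $P$ is an Eulerian path from $o$ to $z$.

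For the enumeration formula, note that $P \mapsto S^P$ is a bijection from the set of Eulerian paths from $o$ to $z$ onto the set of stack configurations satisfying (1) and (2), with inverse $S \mapsto P(o,S)$. Such a configuration is built by first choosing a spanning in-arborescence of $G$ rooted at $z$, in $|\mathcal{A}_z^{\operatorname{in}}(G)|$ ways, which fixes the bottom edge of the stack at each $v \ne z$; one then freely orders the remaining $\operatorname{out-deg}(v)-1$ out-edges of each $v \ne z$ above that bottom edge and freely orders all $\operatorname{out-deg}(z)$ out-edges of $z$, giving the stated product. I expect the ``if'' direction to be the only real obstacle --- specifically the argument that the maximal path exhausts all edges rather than stalling prematurely. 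The crux is to exploit (2) through the observation that any vertex retaining an out-edge also retains its arborescence-edge, which lets one propagate the ``leftover'' structure up $T$ to the sink, where it is impossible; the accompanying degree bookkeeping (tracking which vertices are in- or out-deficient along $P$ and the special roles of $o$ and $z$, including the case $o = z$) is routine but must be carried out with care.
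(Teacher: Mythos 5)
The paper does not actually prove this statement: it quotes the BEST theorem from the literature (de Bruijn--Ehrenfest--Smith--Tutte, with a modern treatment in Stanley), so there is no in-paper argument to compare against. Judged on its own, your proof is correct and complete, and it is the standard argument for this theorem. The ``only if'' direction correctly identifies the bottoms of the stacks at vertices other than $z$ with the last-exit edges of the path; the one small point to tighten is your citation of Lemma~\ref{lem:entrance_exit_basic_properties}, which as stated concerns infinite transient paths, so for a finite Eulerian path you should either invoke the sentence preceding the theorem or add the one-line reason: along the directed path in the last-exit digraph the last-exit times strictly increase, so no cycle can form and every such path terminates at the final vertex $z$, giving a spanning in-arborescence rooted at $z$. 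Your ``if'' direction contains the real content and is sound: the degree bookkeeping (departures minus arrivals equals the endpoint indicator, arrivals bounded by in-degree since edges are used at most once) correctly forces the maximal path $P(o,S)$ to terminate at $z$ with the stack at $z$ exhausted, the leftover digraph $H$ is balanced, and the key observation that a vertex with an uncrossed out-edge must retain its bottom (arborescence) edge lets you push an uncrossed edge up the finite $T$-path to the root, where it contradicts the emptiness of the stack at $z$. The enumeration then follows exactly as you say from the bijection $P\mapsto S^P$ and the independent choices of arborescence and orderings, using that every $v\neq z$ has out-degree at least one because each vertex has a directed path to $z$.
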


While this theorem is typically used to \emph{enumerate} Eulerian paths, we will instead use it to \emph{sample} uniform Eulerian paths, as part of our theoretical analysis of uniform random Eulerian paths: It says that we can sample a uniform random Eulerian path by first sampling a uniform random spanning in-arborescence rooted at $z$, then sampling the remaining edges of the stack using independent uniform random permutations. The uniform random arborescence can in turn be sampled using \emph{Wilson's algorithm} \cite{wilson1996generating} (as we discuss further in Sections \ref{subsec:Wilsons_background} and \ref{subsec:WUA_definition}), which again will play an important theoretical role in our analysis of random Eulerian tours and partially exchangeable processes. The BEST theorem was also used to study finite partially-exchangeable sequences \cite{zaman1984urn,zaman1986finite} in work that predated the introduction of Wilson's algorithm.

While we use the BEST theorem to reduce questions about Eulerian paths to questions about spanning trees, related ideas have also been used to study uniform spanning trees themselves in \cite{hu2021reverse}.

\subsection{Wilson's algorithm}
\label{subsec:Wilsons_background}

We now briefly review \emph{Wilson's algorithm} \cite{wilson1996generating}, referring the reader to \cite[Chapters 4.1 and 10.1]{MR3616205} for further background. We will discuss the algorithm only for digraphs, but everything we write also applies to general Markov chains. 

\medskip

\noindent
\textbf{Random walk.}
Given a locally finite digraph $G=(V,E)$, the \textbf{random walk} on $G$ is the Markov chain which, at each step, picks a uniform random element of the set of edges emanating from its current location, and crosses that edge. Note that the random walk naturally defines a random \emph{path} as well as a random sequence of visited vertices. We will write $\mathbf{P}_x=\mathbf{P}_x^G$ for the law of the random walk on $G$ started at the vertex $x$.

\medskip

\noindent \textbf{Loop erasure.}
Let $G=(V,E)$ be a digraph and let $\gamma$ be a transient path in $G$ indexed by either $\{0,1,2,\ldots\}$ or $\{0,1,\ldots,n\}$ for some $n\geq 0$. The \textbf{loop-erasure} $\textsf{LE}( \gamma )$ of $\gamma$ is formed by erasing cycles from $\gamma$ chronologically as they are created, and is defined formally by setting $\textsf{LE}(\gamma)_i  = \gamma_{t_i}$ and $\textsf{LE}(\gamma)_{i,i+1}  = \gamma_{t_i,t_i+1}$ where the times $t_i$ are defined recursively by $t_0 = 0$ and $t_i = 1+ \max \{ t \geq t_{i-1} : \gamma_t = \gamma_{t_{i-1}}\}$. The loop erasure of random walk is known as \textbf{loop-erased random walk} and has been studied extensively since the pioneering work of Lawler \cite{lawler1980self}.

\medskip

\noindent \textbf{Wilson's algorithm.}
Suppose we have a finite digraph $G=(V,E)$ with a distinguished sink vertex $z$. If every vertex $u$ in $G$ is connected to $z$ by a path that starts at $u$ and ends at $z$, then the set of spanning in-arborescences of $G$ rooted at $z$ is non-empty; we will assume this is the case for the remainder of the discussion. Wilson's algorithm allows us to sample a uniform random spanning in-arborescence of $G$ rooted at $z$ using loop-erased random walks in the following way. Let $V=\{v_0,v_1,\ldots,v_N\}$ be an enumeration of $V\setminus \{z\}$
and define a sequence $( A_i )_{i= 0}^n$ of in-arborescences rooted at $z$  as follows:
\begin{enumerate}
\item Let $A_0$ consists of the vertex $z$ and no edges.
\item Given $A_j$ for some $0\leq j \leq n$, start an independent random walk on $G$ from $v_{j+1}$ stopped when it hits the set of vertices already included in $A_j$. (If $v_{j+1}$ is already included in $A_j$ this path will have length zero.)
\item Form the loop-erasure of this random walk path and let $A_{j+1}$ be the union of $A_j$ with this loop-erased path.
\end{enumerate}
The condition that every vertex of $G$ is connected to $z$ by a directed path ensures that the algorithm terminates in finite time almost surely.
We set $A=A_n = \bigcup_{j=0}^nA_j$ to be the final output of the algorithm, which is a spanning in-arborescence of $G$ rooted at $z$. Wilson \cite{wilson1996generating} proved the miraculous fact that the arborescence $A$ generated by this algorithm is a uniform random element of the set of spanning in-arborescences of $G$ rooted at $z$; in particular, its law does not depend on which enumeration of $V$ we use. (For \emph{undirected} graphs, we can then forget the orientation of edges in the arborescence to get a \emph{uniform spanning tree} of the graph, whose law does not depend on the choice of root. For directed graphs the choice of root is important.)

\medskip

Wilson's algorithm was extended to \emph{infinite} undirected graphs in the seminal work of Benjamini, Lyons, Peres, and Schramm \cite{benjamini2001special}, and plays a central role in the theoretical analysis of uniform spanning trees of infinite lattices. We will discuss how this theory can be extended to the directed case in Section~\ref{subsec:WUA_definition}.

\section{Proof}

In this section we prove Theorem~\ref{thm:DF_Gibbs}. We begin by proving that sourced Eulerian digraphs are always transient for simple random walk in Section~\ref{subsec:transience}. This is important since Wilson's algorithm rooted at infinity has potentially much more complicated behaviour in the recurrent case, where the infinite loop-erased random walk may not be uniquely defined \cite{van2023number}. In Section~\ref{subsec:WUA_definition}, we define the \emph{wired uniform spanning in-arborescence} of an infinite transient digraph as a limit over exhaustions and prove that it can be sampled using Wilson's algorithm rooted at infinity, establishing directed analogues of classical theorems from \cite{Pemantle,benjamini2001special}. Then, in Section~\ref{subsec:Gibbs_uniqueness}, we define Gibbs measure for spanning in-arborescences rooted at infinity and prove that any such measure supported on arboreta with one-ended components must be equal to the wired uniform spanning in-arborescence; this is the technical heart of the paper and draws close inspiration from the second author's work on indistinguishability \cite{hutchcroft2020indistinguishability,hutchcroft2017indistinguishability}. Finally, in Section~\ref{subsec:main_proof} we explain how this theorem implies Theorem~\ref{thm:DF_Gibbs} and state Theorem~\ref{thm:DF_explicit}, which gives an explicit identification of the unique proper Gibbs measure on Eulerian paths when it exists.

\subsection{Transience of sourced Eulerian digraphs}
\label{subsec:transience}

We will make vital use of the following simple but striking lemma concerning the random walk on a sourced Eulerian digraph. As usual, a digraph $G$ is said to be \textbf{transient} if the random walk on $G$ visits each vertex at most finitely often almost surely.

\begin{lemma}
\label{lem:transience}
Every infinite sourced Eulerian digraph is transient. 
\end{lemma}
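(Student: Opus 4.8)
The plan is to exploit the defining degree condition of a sourced Eulerian digraph $(G,o)$: every vertex except $o$ has in-degree equal to out-degree, while $o$ has out-degree exactly one larger than its in-degree. First I would observe that this condition is exactly the statement that the constant function $w\equiv 1$ on edges is a ``unit flow from $o$ to infinity'' in the sense of electrical network theory: if we put conductance $1$ on each directed edge and think of the random walk as the associated reversible-up-to-orientation chain, then at every vertex $v\neq o$ the total incoming edge weight equals the total outgoing edge weight, so Kirchhoff's node law holds, while at $o$ there is a net outflow of exactly one unit. Thus $G$ carries a nonzero flow of finite energy from $o$ to infinity, namely the flow of energy $\sum_{e\in E} 1^2/1 = |E| $... but this is infinite, so a direct energy argument needs a little care and I would instead argue combinatorially.

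The cleaner route is via a direct counting / martingale argument on visits. Fix an exhaustion $V_0\subseteq V_1\subseteq\cdots$ of $V$ by finite connected sets with $o\in V_0$, and consider the random walk started at $o$ run until the first exit from $V_n$. The key step is to track the signed quantity ``number of traversals of edges out of $o$ minus number of traversals of edges into $o$'' along the walk: by the Eulerian degree condition, for the random walk this is a process whose expected increment per visit to each vertex is governed only by the local in/out balance, and the $+1$ surplus at $o$ forces a drift that prevents the walk from returning to $o$ infinitely often. Concretely, I would define $f(v)$ to be the expected number of visits to $o$ by the walk from $v$ before exiting $V_n$, note that $f$ is harmonic for the walk on $V_n\setminus\{o\}$, and use the Eulerian balance to sum the identity $\sum_v (\text{in-edges}) = \sum_v (\text{out-edges})$ against $f$, extracting the inequality that the Green's function $G_n(o,o)$ stays bounded as $n\to\infty$ (the surplus edge at $o$ contributes a telescoping term bounded by $1$). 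Since $G(o,o) = \sum_n \mathbb{P}_o(X_n = o) < \infty$ is equivalent to transience at $o$, and irreducibility-on-the-visited-set then propagates transience to all vertices, this yields the lemma.

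I expect the main obstacle to be making the ``surplus edge at $o$'' bookkeeping rigorous while the graph is only locally finite and not assumed to have bounded degree: one must handle the passage to the exhaustion limit carefully, ensuring that the boundary terms from edges leaving $V_n$ do not contribute negatively in a way that destroys the bound. The safeguard is that the Eulerian condition is preserved in an averaged sense under the exhaustion — each $V_n$ can be completed to a finite Eulerian digraph by adding a sink vertex $z_n$ absorbing all edges leaving $V_n$, after which BEST-type or direct electrical identities on the finite digraph $(G_n, o, z_n)$ give $G_n(o,o) \le C$ uniformly (indeed $G_n(o,o)$ is expressible via spanning in-arborescence counts, all of which share the common factor coming from the single surplus edge at $o$). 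Letting $n\to\infty$ and using monotone convergence of Green's functions then completes the argument; transience of every vertex follows since in an infinite connected locally finite digraph, if one vertex is transient for a walk that a.s.\ visits it (which $o$ does, being the start), then every vertex in the recurrent class would force infinitely many returns to $o$, a contradiction.
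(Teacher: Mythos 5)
Your overall strategy (a uniform bound on the Green's function $G_n(o,o)$ of the walk killed on exiting an exhaustion set $V_n$, extracted from the degree surplus at $o$) is genuinely different from the paper's proof, which is a one-step time-reversal argument: the paper compares each return path to $o$ in $G$ with its reversal in $G^\leftarrow$ and obtains $\mathbf{P}_o(\text{return to } o)\le \operatorname{in-deg}(o)/\operatorname{out-deg}(o)<1$ directly, with no exhaustion, after first reducing to the irreducible case. Your route can be made to work, but as written the key quantitative step is never actually proved. The first version you sketch does go through: with $f(v)$ the expected number of visits to $o$ before exiting $V_n$ (and $f\equiv 0$ at the wired boundary vertex $\partial_n$), computing $\sum_e f(e^+)$ over the edges of $G_n^*$ once by heads and once by tails, using harmonicity of $f$ off $o$ and the relation $\sum_{e\in E^\rightarrow_o}f(e^+)=\operatorname{out-deg}(o)\,(f(o)-1)$ at $o$, yields $f(o)=\operatorname{out-deg}(o)-\sum_{e\in E^\rightarrow_{\partial_n}}f(e^+)\le\operatorname{out-deg}(o)$; in particular the boundary term you flag as the main obstacle enters with the favourable sign, and the correct uniform bound is $\operatorname{out-deg}(o)$, not the ``bounded by $1$'' you assert. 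By contrast, the fallback you actually lean on --- that $G_n(o,o)$ is expressible via spanning in-arborescence counts ``all of which share the common factor coming from the single surplus edge at $o$'' --- is not an argument: the relevant quantity is a ratio of the number of two-component in-forests rooted at $\{o,z_n\}$ to the number of in-arborescences rooted at $z_n$, and the surplus edge at $o$ gives no evident uniform control of this ratio (not every such forest can even be completed to an arborescence by adding an out-edge of $o$). You should commit to and complete the harmonic-function computation rather than gesture at a BEST/arborescence identity.

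The more serious gap is your final step. Finiteness of $G(o,o)$ only shows that $o$ is visited finitely often; transience in the paper's sense requires \emph{every} vertex to be visited finitely often. Your justification --- that a recurrent vertex ``would force infinitely many returns to $o$'' --- presupposes that $o$ is reachable from any recurrent vertex, which is not part of the definition of a sourced Eulerian digraph (already in the directed ray $o\to v_1\to v_2\to\cdots$ no vertex can reach $o$), so a hypothetical recurrent communicating class from which $o$ is unreachable is not excluded by your argument, and ``irreducibility-on-the-visited-set'' is not available. This is precisely what the paper's reduction handles: a recurrent class $C$ can have no edges leaving it, so the induced digraph on $C$ again satisfies $\operatorname{out-deg}\ge\operatorname{in-deg}$ at every vertex, and connectedness of $G$ forces strict inequality at some vertex of $C$; one then applies the return-probability bound (or your Green's function bound) inside $C$ to contradict recurrence. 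You need this additional argument, or an equivalent, to conclude.
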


Before proving the lemma, let us first give some relevant definitions. The \textbf{communicating classes} of $G$ are the equivalence classes of vertices that can be reached from each other via directed paths in $G$; it is a basic result in the theory of Markov chains that the elements of each communicating class are either all transient or all recurrent.
Given a digraph $G$, we write $G^\leftarrow$ for the digraph with the same vertex and edge set as $G$, but where we swap the head of each edge with its tail. In particular, this operation exchanges the in-degree and out-degree of each vertex. 

\begin{proof}[Proof of Lemma~\ref{lem:transience}]
We will prove more generally that if $G$ is a connected digraph for which every vertex has out-degree at least in-degree and this inequality is strict for at least one vertex then $G$ is transient.
It suffices to prove the claim in the case that the digraph $G$ is irreducible. Indeed, if $V_0$ is a recurrent communicating class of $G$ then every edge emanating from a vertex of $V_0$ must have its head in $V_0$ also (since otherwise $V_0$ would not be recurrent), so that the subdigraph of $G$ induced by $V_0$ also has $\operatorname{out-deg}\geq \operatorname{in-deg}$ at every vertex, with strict inequality at at least one vertex: If the latter condition did not hold, $V_0$ would be a connected component of $V$ not equal to $V$, which is not possible since $G$ was assumed to be connected.

When $G$ is irreducible, it suffices to prove that if $o$ is a vertex of $G$ such that $\operatorname{out-deg}(o)>\operatorname{in-deg}(o)$ then the random walk on $G$ started at $o$ has positive probability never to return to $o$.
Consider the digraph $G^\leftarrow$ formed by reversing the direction of every edge in $G$ as above.
Write $\mathbf{P}_o$ and $\mathbf{P}_o^\leftarrow$ for the law of the simple random walk on $G$ and $G^\leftarrow$, respectively, started at $o$.
We can write the probability that the random walk on $G$ returns to $o$ as 
\[
\mathbf{P}_o(\text{return to $o$}) = \sum_{n=1}^\infty \sum_{\gamma} \prod_{i=0}^{n-1} \frac{1}{\operatorname{out-deg}(\gamma_i)},
\]
where the sum is over all paths $\gamma$ of length $n$ in $G$ that start and end at $o$ and do not visit $o$ at any intermediate step. Since the path $\gamma^\leftarrow$ defined by $\gamma_i^\leftarrow=\gamma_{n-i}$ and $\gamma^\leftarrow_{i,i+1}=\gamma_{n-i-1,n-i}$ is a path of the same form in $G^\leftarrow$, we have by the same reasoning that
\begin{multline*}
\mathbf{P}_o^\leftarrow (\text{return to $o$}) =\sum_{n=1}^\infty \sum_{\gamma} \prod_{i=1}^n \frac{1}{\operatorname{in-deg}(\gamma_i)} \\\geq \frac{\operatorname{out-deg}(o)}{\operatorname{in-deg}(o)}\sum_{n=1}^\infty \sum_{\gamma} \prod_{i=1}^n \frac{1}{\operatorname{out-deg}(\gamma_i)}  = \frac{\operatorname{out-deg}(o)}{\operatorname{in-deg}(o)} \mathbf{P}_o(\text{return to $o$}),
\end{multline*}
where the middle inequality follows since every vertex has in-degree at most out-degree. Since the probability $\mathbf{P}^\leftarrow_o (\text{return to $o$})$ is at most $1$, it follows that
\[
\mathbf{P}_o(\text{return to $o$}) \leq \frac{\operatorname{in-deg}(o)}{\operatorname{out-deg}(o)} < 1
\]
as claimed. 
\end{proof}

\begin{remark}
It is a theorem of Benjamini, Gurel-Gurevich, and Lyons \cite{MR2308594} and Benjamini and Hermon \cite{MR4059006} that if $Z$ is a transient Markov chain then the \emph{trace} of $Z$, that is, the \emph{undirected} graph in which the number of edges between any two states is equal to the number of times $Z$ transitions between the two states in either order, is recurrent almost surely. Lemma~\ref{lem:transience} shows that the opposite is true (for deterministic reasons!) when one considers the random walk on the \emph{directed} trace.
\end{remark}

\subsection{Wilson's algorithm rooted at infinity and the wired uniform spanning arboretum}
\label{subsec:WUA_definition}

Let $G=(V,E)$ be an infinite, connected, locally finite digraph.
Our goal in this section is to define the \emph{wired uniform in-arboretum} (WUA) of $G$, which is defined as a limit of uniform random arborescences on finite digraphs, and explain how the WUA can be sampled using \emph{Wilson's algorithm rooted at infinity}. 

\medskip

Let us begin with some relevant definitions.
 An \textbf{exhaustion} of $G$ is a sequence $(V_n)_{n\geq 1}$ of finite subsets of $V$ such that $V_1 \subseteq V_2 \subseteq \cdots$ and $\bigcup_{n\geq 1} V_n = V$. Given an exhaustion $(V_n)_{n\geq 1}$, we write $G_n^*$ for the digraph formed by identifying all the vertices of $V\setminus V_n$ into a single boundary vertex $\partial_n$ and deleting all edges that have both endpoints at $\partial_n$, so that the resulting digraph is locally finite. The edges of $G_n^*$ are naturally identified with the set of edges of $G$ that have at least one endpoint in $V_n$. When $G$ is transient, every vertex of $G_n^*$ is connected to $\partial_n$ by a directed path, so that $G_n^*$ has a non-empty set of in-arborescences rooted at $\partial_n$. We write $\mathsf{UA}_{G_n^*,\partial_n}$ for the law of the uniform random spanning in-arborescence on $G_n^*$ rooted at $\partial_n$, which we consider as a measure on subsets of $E$. The following proposition states that there is a well-defined limit of the uniform random arborescences on these graphs, and relates it to \emph{Wilson's algorithm rooted at infinity}, which we define below.

\begin{proposition}[The wired uniform spanning in-arboretum]
\label{prop:wired_well_defined}
Let $G=(V,E)$ be a infinite, connected, locally finite transient digraph and let $(V_n)_{n\geq 1}$ be an exhaustion of $G$ by finite sets. Then the limit
\[
\mathsf{WUA}_G = \lim_{n\to \infty} \mathsf{UA}_{G_n^*,\partial_n}
\]
is well-defined and does not depend on the choice of exhaustion $(V_n)_{n\geq 1}$. Moreover, the random in-arborescence rooted at infinity that is generated using \emph{Wilson's algorithm rooted at infinity} has law $\mathsf{WUA}_G$.
\end{proposition}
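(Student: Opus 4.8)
The plan is to follow the classical argument \cite{Pemantle,benjamini2001special} identifying the wired uniform spanning forest simultaneously as the weak limit of uniform spanning trees over an exhaustion and as the output of Wilson's algorithm rooted at infinity, replacing spanning trees by spanning in-arborescences throughout; all three claims (existence of the limit, exhaustion-independence, identification with Wilson's algorithm rooted at infinity) will fall out of one almost sure coupling. First I would define \emph{Wilson's algorithm rooted at infinity}: fix an enumeration $V=\{v_1,v_2,\ldots\}$, let $A_0$ be empty, and for $j\geq 1$ run an independent random walk $W^{(j)}$ from $v_j$. Since $G$ is transient --- this is precisely where the transience hypothesis is essential, the recurrent case genuinely failing because the infinite loop-erased walk need not be well defined \cite{van2023number} --- the walk $W^{(j)}$ a.s.\ visits every vertex finitely often, so it either hits the vertex set of $A_{j-1}$ at a finite time or is itself a transient infinite path; in either case its loop-erasure (up to the hitting time, or the full infinite loop-erasure) is well defined by the formula of Section~\ref{subsec:Wilsons_background}, and we take $A_j$ to be $A_{j-1}$ together with this loop-erased path. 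A short deterministic check --- each loop-erased path is simple, two of them meet only at the vertex where the later one first enters the arboretum built so far, and every vertex has out-degree exactly one in the union --- shows $\mathsf F_\pi:=\bigcup_j A_j$ is a.s.\ a spanning in-arboretum of $G$ rooted at infinity.

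Next I would fix an exhaustion $(V_n)$ and an enumeration $\pi$ and run \emph{both} algorithms off a single i.i.d.\ family $(W^{(j)})_{j\geq1}$ of random walks in $G$. Wilson's algorithm rooted at infinity produces $\mathsf F_\pi$ by construction. For the finite graphs, the key point is that a random walk in $G_n^*$ from $v_j$, stopped on hitting the current arboretum, has exactly the law of $W^{(j)}$ run until it hits the current arboretum or first leaves $V_n$ (then identified with $\partial_n$), since a vertex of $V_n$ has the same out-edges in $G_n^*$ as in $G$ up to relabelling heads; feeding $W^{(1)},W^{(2)},\ldots$ with this stopping rule into Wilson's algorithm on $G_n^*$ rooted at $\partial_n$, Wilson's theorem \cite{wilson1996generating} shows the output $\mathsf F^{(n)}$ has law $\mathsf{UA}_{G_n^*,\partial_n}$ (the ordering is irrelevant, and vertices outside $V_n$ are the root and contribute nothing). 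It then suffices to prove that almost surely, for each finite edge set $B$ there is $N<\infty$ with $\mathsf F^{(n)}\cap B=\mathsf F_\pi\cap B$ for all $n\geq N$. This gives $\mathsf{UA}_{G_n^*,\partial_n}\Rightarrow \mathrm{Law}(\mathsf F_\pi)$; and since the left side ignores the enumeration while the right side ignores the exhaustion, this single convergence statement shows at once that the limit exists, is independent of the exhaustion, is independent of the enumeration, and equals $\mathrm{Law}(\mathsf F_\pi)$.

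The almost sure agreement is the technical core, and I would prove it by induction on the Wilson step $j$: almost surely, for every finite vertex set $K$ there is $N$ so that for $n\geq N$ the arboreta built after $j$ steps of the two algorithms contain the same vertices of $K$ and the same edges among them (this suffices since each vertex is absorbed at some finite step and $B$ has finitely many tails). When $v_j$ is already absorbed, or when $W^{(j)}$ hits the current arboretum at a finite time $T$, this is routine: for $n$ large all of $W^{(j)}|_{[0,T]}$ lies in $V_n$ and, by the inductive hypothesis, its pre-$T$ vertices avoid the $n$-th arboretum while $W^{(j)}_T$ lies in it, so both algorithms append $\mathsf{LE}(W^{(j)}|_{[0,T]})$. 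The delicate case is when $W^{(j)}$ escapes to infinity without ever meeting $A_{j-1}$: one must then check that the infinite loop-erasure $\mathsf{LE}(W^{(j)})$ agrees on $K$ with the finite loop-erasure of $W^{(j)}$ stopped on leaving $V_n$ or hitting the $n$-th arboretum. This follows from a loop-erasure stability lemma --- for any transient path $\gamma$, the $i$-th vertex of $\mathsf{LE}(\gamma|_{[0,t]})$ equals that of $\mathsf{LE}(\gamma)$ for all large $t$, because by transience each vertex of $\mathsf{LE}(\gamma)$ is visited by $\gamma$ for the last time at a finite time --- together with the observation that, since $W^{(j)}$ never meets $A_{j-1}$, for $n$ large its initial segment up to the relevant last-visit time also avoids the $n$-th arboretum and stays inside $V_n$, so that the finite stopping time exceeds that time.

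I expect this last reconciliation to be the main obstacle, exactly as in the undirected theory \cite{benjamini2001special}: one has to keep in mind that in the limit $A_{j-1}$ is itself already infinite (a single escaping walk contributes an entire ray), so ``agreement for $n$ large'' must be read as agreement on each fixed finite set rather than globally, and the bookkeeping of which finite set one must enlarge to at each inductive step requires care. A possible alternative, arguably cleaner given the stacks formalism of Section~\ref{subsec:Euler_background}, is to couple all the $\mathsf{UA}_{G_n^*,\partial_n}$ and $\mathsf F_\pi$ through a single field of i.i.d.\ stacks via Propp--Wilson cycle popping \cite{MR3616205} and argue that every cycle popped over $G$ is eventually popped over $G_n^*$; this trades the loop-erasure stability lemma for the order-independence of cycle popping but requires a comparable amount of bookkeeping. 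The remaining ingredients --- Wilson's theorem in the finite case, the identification of $G_n^*$-walks with stopped $G$-walks, local finiteness of $G_n^*$ and non-emptiness of its set of rooted in-arborescences (both consequences of transience, as noted just before the proposition), and the deterministic verification that $\mathsf F_\pi$ is an in-arboretum rooted at infinity --- are routine.
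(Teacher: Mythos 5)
Your proposal is correct and follows essentially the same route as the paper: couple the finite-volume algorithms with Wilson's algorithm rooted at infinity by running both off the same random walks on $G$ (identifying $G_n^*$-walks with $G$-walks stopped on exiting $V_n$), invoke stability of loop-erasure for transient paths, and deduce in one stroke that the limit exists, is exhaustion-independent, and equals the law produced by Wilson's algorithm rooted at infinity. The only cosmetic difference is that the paper starts Wilson's algorithm from the tails of a fixed finite edge set and tracks convergence of cylinder probabilities, while you run the full enumeration and prove an almost-sure local agreement, which is a slightly stronger form of the same coupling.
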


The analogous theorems in the \emph{undirected} case are due to Pemantle \cite{Pemantle}, who proved implicitly that the wired uniform spanning forest is well-defined on any infinite, connected, locally finite graph (via a method that does not generalize to the directed case) and Benjamini, Lyons, Peres, and Schramm \cite{benjamini2001special}, who showed that in the transient case this measure can be sampled from using Wilson's algorithm rooted at infinity.

\begin{remark}
As in the undirected case, it is possible for the limiting measure $\mathsf{WUA}_G$ to be supported on configurations with multiple infinite connected components (that is, the measure can be supported on arboreta rather than arborescences). In the undirected case, Pemantle \cite{Pemantle} proved that the wired uniform spanning forest of $\Z^d$ is connected a.s.\ if and only if $d\leq 4$, and Benjamini, Lyons, Peres, and Schramm \cite{benjamini2001special} proved more generally that the wired uniform spanning forest of a graph is connected a.s.\ if and only if two independent random walks on the graph intersect infinitely often a.s. Considering $\Z^d$ as a digraph in which each neighboring pair of vertices are connected by a pair of directed edges, one in each direction, shows that the same behaviour is possible on digraphs.
\end{remark}

\noindent \textbf{Wilson's algorithm rooted at infinity.} We now define Wilson's algorithm rooted at infinity on transient digraphs; the construction is exactly the same as the construction for \emph{undirected} graphs as given in \cite{benjamini2001special}.
Let $G=(V,E)$ be an infinite, connected, locally finite, transient digraph, and let $V=\{v_0,v_1,\ldots\}$ be an enumeration of $V$. We define a sequence of in-arboreta
$( A_i )_{i\geq 0}$ as follows:
\begin{enumerate}
\item Let $A_0$ be the empty digraph, with no vertices or edges.
\item Given $A_j$ for some $j \geq 0$, start an independent random walk on $G$ from $v_{j+1}$ stopped if and when it hits the set of vertices already included in $A_j$. (It is possible for the walk to run forever, as will certainly be the case in the first step. If $v_{j+1}$ is already included in $A_j$ the path will have length zero.) 
\item Form the loop-erasure of this stopped random walk path and let $A_{j+1}$ be the union of $A_j$ with this loop-erased path.
\end{enumerate}
We set $A= \bigcup_{j=0}^\infty A_j$ to be the final output of the algorithm, which is a spanning in-arboretum of $G$ rooted at infinity. It will follow from Proposition~\ref{prop:wired_well_defined} that this random arboretum has law $\mathsf{WUA}_G$, and in particular that its law does not depend on the choice of enumeration of $V$.
Note that when we run Wilson's algorithm rooted at infinity starting with the vertices $v_1,\ldots,v_k$, the arboretum we generate after the first $k$ steps is precisely the union of the futures of $v_1,\ldots,v_k$ in the full arboretum.

\begin{proof}[Proof of Proposition~\ref{prop:wired_well_defined}]
This follows by an identical proof to the usual proof that Wilson's algorithm rooted at infinity generates the wired uniform spanning forest of an undirected\footnote{For undirected graphs, one typically proves that the limit measure is well-defined using relations to effective resistance and Rayleigh's monotonicity principle, which are not available in the directed setting. This was the approach taken in the original work of Pemantle \cite{Pemantle}, and predates Wilson's algorithm. This proof also establishes the existence of the \emph{free} uniform spanning forest measure.} transient graph: see e.g.\ \cite[Proposition 10.1]{MR3616205}. 
The same argument has previously been used to show that the \emph{oriented} wired uniform spanning forest is well-defined and can be generated using Wilson's algorithm on any undirected transient graph \cite{benjamini2001special,hutchcroft2016wired}.

\medskip

We now provide a brief proof to keep the paper self-contained.
First note that if $\gamma$ is a transient path and $\gamma^k$ denotes the path formed from the first $k$ steps of $\gamma$, then $\mathsf{LE}(\gamma^k)$ converges to $\mathsf{LE}(\gamma)$ as $k \to \infty$ in the sense that for any $m\geq 1$, the two loop-erased paths will have the same initial segment of length $m$ for all sufficiently large $k$. This is an immediate consequence of the definition of the loop-erasure.

\medskip

Fix a finite list of edges $e_1,\ldots,e_M \in E$ and let $u_1,\ldots,u_M$ be the vertices these edges emanate from. If $n$ is sufficiently large that $u_1,\ldots,u_M \in V_n$, we can sample the uniform in-arborescence of $G_n^*$ rooted at $\partial_n$ using Wilson's algorithm, starting with the vertices $u_1,\ldots,u_M$. Moreover, the intersection of this random arborescence with the set $\{e_1,\ldots,e_M\}$ is determined by the first $M$ steps of the algorithm. At each step of the algorithm, instead of using the random walk on $G_n^*$, we can instead stop the random walk on $G$ when it first leaves $V_n$; this does not change the law of the resulting arborescence, since the two walks can be coupled to be identical (as sequences of edges) after appropriate identification of the edges of $G_n^*$ with edges of $G$. In particular, we can run Wilson's algorithm using the \emph{same} infinite random walk paths $X^1,\ldots,X^M$ on $G$ started at the vertices $u_1,\ldots,u_M$ for every $n\geq 1$; the effect of working in finite volume is simply to stop these walks when they leave $V_n$. Write $X^{i,n}$ for the walk started at $u_i$ and stopped when it leaves $V_n$, let 
$\tau_i^n$ be the first time that $X^{i,n}$ reaches the set of vertices included in the arborescence generated by the random walks $X^{j,n}$ with $j<i$, and let $L^{i,n}$ be the loop-erasure of $X^{i,n}$ run up to time $\tau_i^n$. Then
\begin{align*}
\mathsf{UA}_{G_n^*,\partial_n}(e_1,\ldots,e_M \text{ are all included}) = \mathbb{P}\left( e_1,\ldots,e_M \text{ are each crossed by one of the paths $L^{1,n},\ldots,L^{M,n}$} \right).
\end{align*}
Let $L^1,\ldots,L^m$ be the analogous loop-erased random walks generated when running Wilson's algorithm rooted at infinity with the walks $X^1,\ldots,X^M$. It follows by induction on $i$ that $L^{i,n}$ converges to $L^i$ as $n\to\infty$, so that the limit
\begin{align*}
\lim_{n\to\infty}\mathsf{UA}_{G_n^*,\partial_n}(e_1,\ldots,e_M \text{ are all included}) = \mathbb{P}\left( e_1,\ldots,e_M \text{ are each crossed by one of the paths $L^{1},\ldots,L^{M}$} \right)
\end{align*}
is well-defined independently of the choice of exhaustion. Since $e_1,\ldots,e_M$ were arbitrary and the probabilities of all other cylinder events can be computed using inclusion-exclusion, this implies both that the measure $\mathsf{WUA}_G$ is well-defined and that it can be generated using Wilson's algorithm rooted at infinity.
\end{proof}

\begin{remark}
While the \emph{wired uniform spanning forest} of an \emph{undirected} graph is always well-defined independently of the choice of exhaustion, the analogous statement for \emph{directed} graphs is not always true without the assumption of transience. For example, consider the digraph with vertex set $\Z$ and with a pair of oriented edges in each direction between each two adjacent integers. If we take the limit over the exhaustion $([-n,n^2])_{n\geq 1}$ then the limit will be supported on the arborescence consisting of every left-directed edge, while if we use exhaustion $([-n^2,n])_{n\geq 1}$ then the limit will be supported on the arborescence consisting of every right-directed edge. For general recurrent graphs, having a well-defined wired uniform arborescence measure is closely related to the uniqueness of the \emph{harmonic measure from infinity} and \emph{potential kernel}; see \cite{BvE21,van2023number} for further details.
\end{remark}

\subsection{Uniqueness of Gibbs measures with one-ended components}
\label{subsec:Gibbs_uniqueness}

The goal of this subsection is to prove the following theorem. The definition of Gibbs measures on the set $\mathcal{A}_\infty^\mathrm{in}(G)$ of spanning in-arborescences of $G$ rooted at infinity will be given after the statement.

\begin{theorem}
\label{thm:one_end_uniqueness}
Let $G=(V,E)$ be an infinite, locally finite, connected digraph. 
If $G$ is transient then there is at most one Gibbs measure on $\mathcal{A}_\infty^\mathrm{in}(G)$ that is supported on configurations in which every component is one-ended. Moreover, if such a measure exists then it must be equal to the measure $\mathsf{WUA}_G$.
\end{theorem}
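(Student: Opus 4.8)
The plan is to deduce Theorem~\ref{thm:one_end_uniqueness} from the fact that Wilson's algorithm rooted at infinity has output law $\mathsf{WUA}_G$ (Proposition~\ref{prop:wired_well_defined}) by showing that any Gibbs measure $\mu$ supported on arboreta with one-ended components can be transported onto the law of Wilson's algorithm by a sequence of measure-preserving resampling operations that reconstruct the algorithm step by step. Thus it suffices to prove $\mu=\mathsf{WUA}_G$ for such a $\mu$, and a priori we need nothing about $\mathsf{WUA}_G$ beyond Proposition~\ref{prop:wired_well_defined} (in particular one-endedness of $\mathsf{WUA}_G$ will come out as a by-product).

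Call a finite set $F\subseteq V$ \emph{past-closed} in a configuration $A$ if no edge of $A$ leads from $V\setminus F$ into $F$; this is an event measurable with respect to the out-edges of $V\setminus F$, and on it no edge of $A$ leaves $V\setminus F$ for $F$, so collapsing $V\setminus F$ to a single sink vertex $\partial$ yields a finite digraph $G_F^*$ in which $\partial$ is genuinely a sink, and a completion of the out-edges of $F$ produces a valid one-ended arboretum rooted at infinity precisely when those out-edges form a spanning in-arborescence of $G_F^*$ rooted at $\partial$. Hence the Gibbs property says that, conditionally on the out-edges of $V\setminus F$ and on the event that $F$ is past-closed, the out-edges of $F$ are distributed as the uniform spanning in-arborescence of $G_F^*$ rooted at $\partial$. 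By the finite BEST theorem (equivalently, by Wilson's algorithm), such an arborescence can be drawn by running loop-erased random walks from the vertices of $F$ in any prescribed order; starting the procedure at a designated vertex $v\in F$, the future path of $v$ in the output is the loop-erased random walk from $v$ in $G$ run until it first leaves $F$. Define $R_{F,v}$ to act as the identity off the event that $F$ is past-closed, and on that event to delete the out-edges of $F$ and re-draw them by this procedure started at $v$ with fresh randomness. Using the description just given of the conditional law, one checks that $R_{F,v}$ preserves $\mu$ and keeps every component one-ended.

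To reconstruct Wilson's algorithm rooted at infinity, fix an enumeration $v_1,v_2,\ldots$ of $V$. First force the future path of $v_1$ to be the infinite loop-erased random walk from $v_1$: apply $R_{F,v_1}$ for a sequence of past-closed sets $F$ containing larger and larger neighbourhoods of $v_1$, using that loop-erased random walk on the transient graph $G$ stopped at its exit from a set exhausting $V$ converges to the infinite loop-erased random walk (this is exactly the mechanism behind Proposition~\ref{prop:wired_well_defined}). Then, inductively, force the future path of $v_k$ to be the loop-erased random walk from $v_k$ stopped on the already-constructed futures of $v_1,\ldots,v_{k-1}$, by resampling past-closed sets that contain a large neighbourhood of $v_k$ but avoid that built-up forest. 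Since every edge of a configuration is the out-edge of some vertex, the increasing union of the futures of $v_1,v_2,\ldots$ is the entire configuration, so in the limit the transported measure is the law of Wilson's algorithm rooted at infinity, namely $\mathsf{WUA}_G$; as each operation preserves $\mu$, this yields $\mu=\mathsf{WUA}_G$.

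The main obstacle is the gap between the clean operation $R_{F,v}$, which is indexed by \emph{deterministic} finite sets $F$, and the reconstruction above, which wants to resample the \emph{random} past-closure of a neighbourhood of $v_k$ (with the already-built forest excised): such random sets are not in general measurable with respect to the exterior $\sigma$-algebra that the Gibbs property requires, and the event that a deterministic $F$ is past-closed need not hold with high probability. Bridging this — by revealing past-closures from the outside in using only the permitted out-edges, and by controlling how successive resamplings interact so that the growing union of futures genuinely realises Wilson's algorithm rooted at infinity — is where the argument must borrow the update-tolerance and indistinguishability techniques of \cite{hutchcroft2017indistinguishability,hutchcroft2020indistinguishability}, and constitutes the technical heart of the proof. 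A final diagonal argument over cylinder events is then needed to organise the infinitely many resamplings and the successive limits $F\uparrow V$ into the single conclusion $\mu=\mathsf{WUA}_G$.
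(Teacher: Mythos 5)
There is a genuine gap, and you flag it yourself: the ``technical heart'' of your argument is deferred rather than carried out. Two specific problems. First, your basic operation $R_{F,v}$ rests on a DLR-type spatial Markov property: that conditionally on the out-edges of $V\setminus F$ and on the event that $F$ is past-closed, the out-edges of $F$ are uniform on spanning in-arborescences of $G_F^*$ rooted at $\partial$. But the paper's Gibbs measures are \emph{defined} only as weak limits $\lim_n \int \Phi_n^*(\mathsf{UA}_{G_n,z_n})\,\mathrm{d}\nu$ over (possibly random) finite approximations; the conditional resampling property is not part of that definition, and deducing it would require an extra argument (conditioning on events of possibly small or vanishing probability does not commute with weak limits), which the paper explicitly declines to develop (see the remark invoking \cite{halberstam2023uniqueness}). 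Second, even granting $R_{F,v}$, your reconstruction of Wilson's algorithm rooted at infinity needs to resample along \emph{random} past-closed sets (the past-closure of a growing neighbourhood of $v_k$ with the already-built futures excised), while $R_{F,v}$ acts only on the event that a \emph{deterministic} $F$ is past-closed, an event with no uniform lower bound on its probability; the claim that iterating these operations transports $\mu$ onto the law of Wilson's algorithm, and that the limit of the iterates is simultaneously $\mu$ and $\mathsf{WUA}_G$, is precisely what would have to be proved, and you do not prove it. As written, the proposal is a programme in the style of the update-tolerance/indistinguishability arguments, not a proof.

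For comparison, the paper's proof avoids both issues and never needs a conditional specification. It works directly with the finite approximations $(G_n,z_n,\Phi_n)$ furnished by the definition of a Gibbs measure and samples $\mathsf{UA}_{G_n,z_n}$ by Wilson's algorithm rooted at $z_n$ with a staged enumeration: first all vertices outside $\Phi_n^{-1}(V_m)$, then the tails of the finite edge set $F$ of interest. The influence of the first stage on $V_m$ is the random set $Z_{n,m}$ of vertices lying in the future of vertices outside $V_m$; as $n\to\infty$ this converges to $Z_m$, and one-endedness of the components of $A$ enters only through the fact that $Z_m$ converges weakly to the empty set as $m\to\infty$. Passing $n\to\infty$ and then $m\to\infty$ identifies $\mu(F\subseteq A)$ with the corresponding probability for Wilson's algorithm rooted at infinity, whence $\mu=\mathsf{WUA}_G$. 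If you want to salvage your route, you would need to first establish the DLR property from the weak-limit definition and then supply the missing coupling/limiting argument for the iterated resamplings; both steps are substantial, and the paper's staging trick is the cleaner way to make one-endedness do the work.
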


\begin{remark}
The analogue of Theorem~\ref{thm:one_end_uniqueness} for \emph{undirected} graphs follows trivially from the fact that the WUSF is stochastically minimal among all Gibbs measures for the UST. Indeed, any UST Gibbs measure that is not equal to the WUSF can be coupled to with the WUSF in such a way that it strictly contains the WUSF with positive probability, and on this event there must exist a tree with more than one end. 
\end{remark}

\begin{remark}
Theorem~\ref{thm:one_end_uniqueness} does \emph{not} imply that the measure $\mathsf{WUA}_G$ is supported on configurations in which every component is one-ended, and indeed there do exist transient undirected graphs in which the WUSF is supported on components with multiple ends. 
There is a rich history of works studying the number of ends of uniform spanning forest components of undirected graphs \cite{AldousLyonsUnimod2007,benjamini2001special,BvE21,van2023number,hutchcroft2016wired,hutchcroft2019uniform,MR3773383,LyonsMorrisSchramm2008}. In our primary application to the Diaconis--Freedman conjecture, one-endedness holds trivially by Lemma~\ref{lem:entrance_exit_basic_properties}.
\end{remark}

\begin{remark}
Theorem~\ref{thm:one_end_uniqueness} can be used to deduce that the measure $\mathsf{WUA}_G$ is tail-trivial whenever its components are one-ended a.s., since otherwise we could condition on a tail event to get a distinct Gibbs measure with this property. For the wired uniform spanning forest on an \emph{undirected} graph, tail triviality of the measure was proven by Benjamini, Lyons, Peres, and Schramm \cite{benjamini2001special} using monotonicity properties that derive from relations with electrical networks and do not generalize to the directed case. The proof of Theorem~\ref{thm:one_end_uniqueness} is inspired by arguments used to prove \emph{indistinguishability theorems} for the wired uniform spanning forest \cite{hutchcroft2020indistinguishability,hutchcroft2017indistinguishability}, which can be thought of as a strong form of tail triviality. In contrast to the usual proof of tail triviality for the WUSF, these arguments are based directly on the analysis of Wilson's algorithm and are better suited to the nonreversible setting. It should be possible to use these arguments to prove analogous indistinguishability theorems for the wired uniform spanning in-arboretum, but we do not pursue this here.
\end{remark}

We now define Gibbs measures on $\mathcal{A}_\infty^\mathrm{in}(G)$, beginning with the definition of a \emph{finite approximation} to $G$.

\medskip

\noindent \textbf{Finite approximations.} 
Given two digraphs $G=(V,E)$ and $G'=(V',E')$, a \textbf{partial isomorphism} $\Phi$ from $G$ to $G'$ consists of the data $(A,A',F,F',\Phi_V,\Phi_E)$, where $A\subseteq V$ and $A'\subseteq V'$ are sets of vertices, $F\subseteq E$ and $F'\subseteq E'$ are sets of edges, and $\Phi_V:A\to A'$ and $\Phi_E:F\to F'$ are bijections such that $\Phi_E(e)^+=\Phi_V(e^+)$ and $\Phi_E(e)^-=\Phi_V(e^-)$ for every edge $e\in F$. In particular, the sets $A$ and $A'$ must include both endpoints of every edge in $F$ and $F'$ respectively.
Let $G$ be an infinite, locally finite, connected digraph. A \textbf{finite approximation} to $G$ is a sequence $(G_n,z_n,\Phi_n)_{n \geq 1}$ where, for each $n\geq 1$, $G_n$ is a finite digraph, $\Phi_n$  is a partial isomorphism from $G_n$ to $G$, and $z_n$ is a vertex of $G_n$ satisfying the following properties:
\begin{enumerate}
	\item (Divergent radius of identification.) For each finite set of vertices $W$ of $G$ there exists $N<\infty$ such that for every $n\geq N$, the set $W$ is contained in the range of $\Phi_n$ and $\Phi_n$ maps the set of edges incident to $\Phi_n^{-1}(W)$ bijectively onto the set of edges incident to $W$.
	\item (The sink goes to infinity.) For each finite set of vertices $W$ of $G$, there exists $N<\infty$ such that $z_n \notin \Phi_n^{-1}(W)$ for every $n\geq N$.
\end{enumerate}
For example, if $(V_n)_{n\geq 1}$ is an exhaustion of $G$, the graphs $G_n^*$ together with the boundary vertices $\partial_n$ naturally form a finite approximation to $G$, where the partial isomorphism $\Phi_n$ identifies the vertices of $V_n$ and edges with both endpoints in $V_n$ between the two graphs. 

\medskip

\noindent \textbf{Gibbs measures.}
We define a probability measure $\mu$ on the set $\mathcal{A}_\infty^\mathrm{in}(G)$ of spanning in-arborescences rooted at infinity  to be a \textbf{Gibbs measure} if it can be written 
\[
\mu = \lim_{n\to \infty} \int \Phi_n^* (\mathsf{UA}_{G_n,z_n}) \mathrm{d} \nu
\]
where $\nu$ is a probability measure on finite approximations $(G_n,z_n,\Phi_n)$ to $G$ and $\Phi_n^*$ denotes the pushforward map associated to the partial isomorphism $\Phi_n$. That is, Gibbs measures on $\mathcal{A}_\infty^\mathrm{in}(G)$ are weak limits of uniform random spanning arborescences on finite approximations to $G$, where we allow these finite approximations to themselves be randomized and use the partial isomorphisms to consider the measures as measures on sets of edges in $G$. For example, the measure $\mathsf{WUA}_G$ constructed in Proposition~\ref{prop:wired_well_defined} is a Gibbs measure on $\mathcal{A}_\infty^\mathrm{in}(G)$.

\begin{remark}
Following \cite{halberstam2023uniqueness}, it is also possible to give an equivalent axiomatic definition of Gibbs measures on $\mathcal{A}_\infty^\mathrm{in}(G)$ in terms of the Dobrushin–Lanford–Ruelle (DLR) equations for suitable \emph{augmentations} of the measure. We do not pursue this here.
\end{remark}

\begin{proof}[Proof of Theorem~\ref{thm:one_end_uniqueness}]
Let $((G_n,z_n,\Phi_n))_{n\geq 1}$ be a (possibly random) finite approximation to $G$ such that the law $\int \Phi_n^* (\mathsf{UA}_{G_n,z_n}) \mathrm{d} \nu$ of the image under $\Phi_n$ of the uniform spanning in-arborescence rooted at $z_n$ on $G_n$ converges weakly to some measure $\mu$ on $\mathcal{A}_\infty^\mathrm{in}(G)$ that is supported on configurations in which every component is one-ended.  We wish to show that this measure must be equal to $\mathsf{WUA}_G$; we will do this by showing that it can be sampled using Wilson's algorithm rooted at infinity.
As in the proof of Proposition~\ref{prop:wired_well_defined}, we will use the fact that if $\gamma$ is a transient path and $\gamma^k$ denotes the path formed from the first $k$ steps of $\gamma$, then $\mathsf{LE}(\gamma^k)$ converges to $\mathsf{LE}(\gamma)$ as $k \to \infty$ in the sense that for any $m\geq 1$, the two loop-erased paths will have the same initial segment of length $m$ for all sufficiently large $k$.

\medskip

Let $F$ be a finite set of edges in $G$, let $K$ denote the set of tails of edges in $F$, and let $(V_m)_{m\geq 1}$ be an exhaustion of $V$ by finite sets. For each $m\geq 1$, let $E_m$ be the set of edges with at least one endpoint in $V_m$, let $\bar V_m \supseteq V_m$ denote the set of endpoints of edges of $E_m$, and let $\partial V_m = \bar V_m \setminus V_m$, which is finite for each $m\geq 1$ since $G$ is locally finite. For each $n,m\geq 1$ let $\mathscr{D}_{n,m}$ denote the event that $E_m$ and $\bar V_m$ are both contained in the range of the partial isomorphism $\Phi_n$ and that for each vertex $v$ in $\Phi_n^{-1}(\bar V_m)$, the partial isomorphism $\Phi_n$ maps the edges incident to $v$ bijectively to the edges incident to the image of $v$. Thus, when $\mathscr{D}_{n,m}$ holds, the random walk on $G$ and $G_n$ can be coupled so that (after identification via $\Phi_n$) they coincide up until they first leave the set $V_m$.

\medskip

Let $A$ be a random variable in $\mathcal{A}_\infty^\mathrm{in}(G)$ with law $\mu$. For each $n\geq 1$ let $A_n$ be a sample of the uniform spanning in-arborescence of $G_n$ rooted at $z_n$, and let $Z_{n,m}$ be the intersection with $V_m$ of the image under $\Phi_n$ of the set of vertices of $G_n$ that belong to the future of some vertex not in $\Phi_n^{-1}(V_m)$. When $\mathscr{D}_{n,m}$ holds, $Z_{n,m}$ is equal to the intersection with $V_m$ with the image under $\Phi_n$ of the set of vertices of $G_n$ that belong to the future of some vertex in $\Phi_n^{-1}(\partial V_m)$: This is because the future of any vertex not in $\Phi_n^{-1}(V_m)$ must pass through $\Phi_n^{-1}(\partial V_m)$ in order to reach $\Phi_n^{-1}(V_m)$. Since $\partial V_m$ is finite, it follows that $Z_{n,m}$ converges in distribution as $n\to\infty$ to $Z_m$, the set of vertices in $V_m$ that belong to the future in $A$ of some vertex in $\mathcal{A}_\infty^\mathrm{in}(G)$. The statement that $A$ has one-ended components almost surely is equivalent (by continuity of measure) to the statement that $Z_m$ converges weakly to the empty set as $m\to \infty$ in the sense that if $W$ is any finite subset of $V$ then the probability that $Z_m$ intersects $W$ converges to zero as $m\to \infty$.

\medskip

Fix an enumeration $\{v_1,\ldots,v_\ell\}$ of the set $K$ and let $n,m\geq 1$.
Once we have sampled $(G_n,z_n,\Phi_n)$, we can sample the uniform spanning in-arborescence $A_n$ using Wilson's algorithm rooted at $z_n$. We do this using an enumeration of the vertex set of $G_n$ that is broken into three stages: We first enumerate the vertices not belonging to $\Phi_n^{-1}(V_m)$, then use the fixed enumeration $\Phi_n^{-1}(v_1),\ldots,\Phi_n^{-1}(v_\ell)$ of the set $\Phi_n^{-1}(K)$, then enumerate the remaining vertices in an arbitrary way.
In the first stage, in which we are using the vertices that do not belong to $\Phi_n^{-1}(V_m)$, we generate the part of the arborescence that is equal to the union of the futures of all the vertices in this set.

\medskip

Now consider the second stage of the algorithm. All the walks used in this stage will take place inside the set $\bar V_m$, so that when the event $\mathscr{D}_{n,m}$ holds they all occur inside the part of $G_n$ that is identified with $G$ by the partial isomorphism $\Phi_n$. As such, we can safely consider all these walks to take place on $G$ rather than $G_n$ whenever the event $\mathscr{D}_{n,m}$ holds, and we will drop the partial isomorphism $\Phi_n$ from our notation.
Let $X^1,\ldots,X^{\ell}$ be independent random walks on $G$ started at $v_1,\ldots,v_\ell$. For each $1\leq i \leq \ell$ and $n,m\geq 1$ such that $\mathscr{D}_{n,m}$ holds, let 
$\tau_i^{n,m}$ be the first time that $X^{i}$ reaches the set of vertices included in the arborescence generated by an earlier part of Wilson's algorithm: This includes both the set of vertices $Z_{n,m}$ generated in the first stage of the algorithm and the additional loop-erased paths associated to the walks $X^{j,n}$ with $j<i$. Let $L^{i,n,m}$ be the loop-erasure of $X^{i}$ run up to time $\tau_i^{n,m}$. Then, letting $\nu_n$ denote the joint law of $(G_n,z_n,\Phi_n)$ and the uniform random spanning in-arborescence $A_n$ of $G_n$ rooted at $z_n$,
\begin{equation*}
\nu_n(F \subseteq A_n \mid \mathscr{D}_{n,m}) 
= \mathbb{P}\left(\text{Every edge in $F$ crossed by one of the paths $L^{1,n,m},\ldots,L^{\ell,n,m}$} \mid \mathscr{D}_{n,m} \right).
\end{equation*}
As in the proof of Proposition~\ref{prop:wired_well_defined}, we can induct over $\ell$ to take the limit as $n\to\infty$ and obtain that
\begin{equation*}
\mu(F \subseteq A) 
= \mathbb{P}\left(\text{Every edge in $F$ crossed by one of the paths $L^{1,m},\ldots,L^{\ell,m}$} \right),
\end{equation*}
where the loop-erased paths $L^{1,m},\ldots,L^{\ell,m}$ are formed by running Wilson's algorithm with the walks $X^1,\ldots,X^\ell$ but where we consider the set $Z_m$ to be included in the arborescence at time zero.
Since $Z_m$ converges weakly to the empty set as $m\to\infty$, we can  obtain, by similar reasoning, that 
the probability on the right hand side converges as $m\to \infty$ to the analogous probability for the standard implementation of Wilson's algorithm rooted at infinity. Since the finite set of edges $F$ was arbitrary, this implies that $\mu$ is equal to $\mathsf{WUA}_G$ as claimed. (As before, the probabilities of arbitrary cylinder events are determined by probabilities of the form $\mu(F \subseteq A)$ using inclusion-exclusion.) \qedhere
\end{proof}

\subsection{Proof of the main theorem}
\label{subsec:main_proof}

It remains only to deduce Theorem~\ref{thm:DF_Gibbs} from Theorem~\ref{thm:one_end_uniqueness} and the BEST theorem. We will in fact prove the following stronger form of the theorem, which gives an explicit identification of the unique proper Gibbs measure along with a method to sample from it. Note that the theorem implicitly makes use of Lemma~\ref{lem:transience}, which ensures that $G$ is transient and hence that Wilson's algorithm rooted at infinity is well-defined.

\begin{theorem}
\label{thm:DF_explicit}
Let $(G,o)$ be a sourced Eulerian digraph, and suppose that $\mu$ is a proper Gibbs measures on Eulerian paths from $o$ to $\infty$ in $G$. We can sample from $\mu$ using the following procedure:
\begin{enumerate}
\item First, sample the wired uniform spanning in-arboretum $A$ of $G$ rooted at infinity using Wilson's algorithm rooted at infinity as in Section~\ref{subsec:WUA_definition}. This arboretum will form the last-exit arboretum of the Eulerian path.
\item For each vertex $v$ of $G$, pick a uniform random ordering of the edges of $E_v^\rightarrow$ that do not belong to the arboretum $A$. These random orderings are chosen conditionally independently of one another given $A$.
\item Form a stack configuration on $G$ by placing the edges of the arboretum $A$ at the bottom of the stack at each vertex, and using the uniform random orderings constructed in Step 2 as the remaining part of the stack.
\item Use this stack configuration to construct a path starting at $o$ as described in Section~\ref{subsec:Euler_background}.
\end{enumerate}
\end{theorem}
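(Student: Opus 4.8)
The plan is to prove this stronger statement by identifying, for a random Eulerian path $X$ with law $\mu$, the full law of its associated stack configuration $S^X$, and then using that $X=P(o,S^X)$ to transfer this identification to the output of the procedure. Write $A:=\mathscr{L}(X)$ for the last-exit arboretum of $X$ and, for each vertex $v$, let $\pi_v$ be the order in which $X$ crosses the out-edges of $v$ other than $A(v)$; since $X$ is Eulerian and $G$ is connected, $A$ is a spanning in-arboretum of $G$ rooted at infinity with one-ended components (Lemma~\ref{lem:entrance_exit_basic_properties}), and $S^X$ is recovered from $A$ together with $(\pi_v)_{v\in V}$. It therefore suffices to show that (i) $A$ has law $\mathsf{WUA}_G$, and (ii) conditionally on $A$ the orderings $(\pi_v)_{v\in V}$ are independent, with $\pi_v$ uniform on the orderings of $E^\rightarrow_v\setminus\{A(v)\}$; this is exactly the law of the stack configuration built in Steps 1--3, and Step 4 applies the measurable map $P(o,\cdot)$ to both sides. (This common description for every proper Gibbs measure also yields the uniqueness in Theorem~\ref{thm:DF_Gibbs}.)

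For (i), let $Y^{(n)}=(X_0,\dots,X_n)$ denote the first $n$ steps of $X$, let $H_n$ be the finite digraph whose edge set is exactly the set of edges crossed by $Y^{(n)}$, and set $z_n=X_n$. Since $X$ is Eulerian, $Y^{(n)}$ is an Eulerian path from $o$ to $z_n$ in $H_n$, so $(H_n,o,z_n)$ is a finite Eulerian digraph, and — since $X$ is transient — the triples $(H_n,z_n,\iota_n)$ (with $\iota_n$ the inclusion of $H_n$ into $G$) form an a.s.\ valid finite approximation to $G$. By the defining property of a proper Gibbs measure, conditionally on the steps $X$ takes after time $n$ — data that determine $H_n$ and $z_n$ — the path $Y^{(n)}$ is a uniform Eulerian path from $o$ to $z_n$ in $H_n$. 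By the BEST theorem applied to $H_n$, this is equivalent to saying that, conditionally on that data, the last-exit arborescence $\mathscr{L}(Y^{(n)})$ of $Y^{(n)}$ is a uniform spanning in-arborescence of $H_n$ rooted at $z_n$, and that, conditionally on $\mathscr{L}(Y^{(n)})$, the remaining parts of the stacks are independent and uniform over orderings. Because $X$ visits each vertex finitely often, $\mathscr{L}(Y^{(n)})$ converges edge-by-edge to $A$ as $n\to\infty$, so the law of $A$ is the weak limit $\lim_n\int\iota_n^*(\mathsf{UA}_{H_n,z_n})\,\mathrm{d}\nu$ of uniform spanning in-arborescences on (random) finite approximations, i.e.\ a Gibbs measure on $\mathcal{A}_\infty^{\mathrm{in}}(G)$ supported on one-ended configurations. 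Since $G$ is transient by Lemma~\ref{lem:transience}, Theorem~\ref{thm:one_end_uniqueness} forces this Gibbs measure to be $\mathsf{WUA}_G$, proving (i).

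For (ii), I would fix finite sets $U\subseteq U_1$ of vertices and run the same conditional description at a large scale. Let $B_{n,U_1}$ be the event that every out-edge of $U_1$ has been crossed by time $n$ and that $X_n\notin U_1$; this event is measurable with respect to the steps after time $n$, and its probability tends to $1$ as $n\to\infty$ by transience. On $B_{n,U_1}$ the restriction of $S^X$ to $U_1$ coincides with that of $S^{Y^{(n)}}$, and $A|_{U_1}=\mathscr{L}(Y^{(n)})|_{U_1}$; hence the BEST description above shows that, conditionally on the post-time-$n$ steps together with $A|_{U_1}$, and on $B_{n,U_1}$, the orderings $(\pi_v)_{v\in U}$ are independent and uniform over orderings of $E^\rightarrow_v\setminus\{A(v)\}$. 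Integrating against an arbitrary bounded function of $A|_{U_1}$ and letting $n\to\infty$ (using $\mathbf 1_{B_{n,U_1}}\to1$) shows that $(\pi_v)_{v\in U}$ has this conditional law given $A|_{U_1}$; letting $U_1\uparrow V$ and using $\bigvee_{U_1}\sigma(A|_{U_1})=\sigma(A)$ upgrades this to a conditional statement given all of $A$. As $U$ was an arbitrary finite set, this is (ii), and combining with (i) completes the proof (and shows en passant that the procedure a.s.\ outputs a genuine Eulerian path whenever $\mu$ exists).

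I expect the main obstacle to be the bookkeeping in the last paragraph: the arboretum $A$ is \emph{not} measurable with respect to the post-time-$n$ steps, so one must condition only on the finite restriction $A|_{U_1}$ — which becomes visible precisely once $U_1$ is ``finished'', the content of the identity $A|_{U_1}=\mathscr{L}(Y^{(n)})|_{U_1}$ on $B_{n,U_1}$ — and carefully justify the double limit $n\to\infty$, $U_1\uparrow V$. A secondary technicality is checking that $\mathscr{L}(Y^{(n)})$ is genuinely the bottom-of-stack arborescence governed by the BEST theorem, including in the degenerate case $z_n=o$ where $Y^{(n)}$ is a closed Eulerian circuit.
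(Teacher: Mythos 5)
Your proposal is correct and takes essentially the same route as the paper: identify the law of the last-exit arboretum as $\mathsf{WUA}_G$ via the proper-Gibbs property, the BEST theorem, Lemma~\ref{lem:entrance_exit_basic_properties} and Theorem~\ref{thm:one_end_uniqueness}, then upgrade the finite-$n$ BEST description of the stacks to infinite volume. The only deviation is in how that last limit is organized --- the paper takes a weak limit of the stack-configuration laws and couples the generated paths, whereas you compute the conditional law of the orderings given the arboretum directly (via the events $B_{n,U_1}$ and a martingale limit) and push forward through $X=P(o,S^X)$ --- which is a harmless repackaging of the same argument.
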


Note that this theorem is an exact analogue of the method of sampling uniform random Eulerian paths on \emph{finite} digraphs using Wilson's algorithm and the BEST theorem. Note also that although this sampling procedure makes sense on \emph{any} sourced Eulerian digraph (or indeed on any transient locally finite digraph), it may fail to produce an Eulerian path in general; if the path produced by this method is not a.s.\ Eulerian, then $(G,o)$ does not admit any proper Gibbs measures on Eulerian paths.

\begin{proof}[Proof of Theorem~\ref{thm:DF_Gibbs}]
Let $(G,o)$ be an infinite sourced Eulerian digraph, let $\mu$ be a proper Gibbs measure on Eulerian paths in $G$, and let $X$ be a random variable with law $\mu$. For each $n\geq 0$, let $\mathcal{G}_n$ be the sigma-algebra generated by the steps taken by $X$ after time $n$, let $F_n$ be the set of edges that are crossed by $X$ in its first $n$ steps, and let $G_n$ be the subgraph of $G$ spanned by $F_n$, which it Eulerian with source $o$ and sink $X_n$. The Gibbs property implies that, conditional on $\mathcal{G}_n$, the first $n$ steps of $X$ are a uniform random Eulerian path from $o$ to $X_n$ in $G_n$. Moreover, since $\mu$ is proper, $(G_n,X_n,\Phi_n)_{n\geq 1}$ forms a finite approximation to $G$ where $\Phi_n$ is the inclusion map from $G_n$ to $G$. Letting $X^n$ denote the first $n$ steps of $X$, it follows from the BEST theorem that, conditional on $\mathcal{G}_n$, the last-exit arborescence $\mathscr{L}(X^n)$ is distributed as a uniform random spanning in-arborescence of $G_n$ rooted at $X_n$. Since $\mathscr{L}(X^n)$ converges to $\mathscr{L}(X)$ as $n\to \infty$, the law of $\mathscr{L}(X)$ is a Gibbs measure on $\mathcal{A}_\infty^\mathrm{in}(G)$. Since every component of $\mathscr{L}(X)$ is one-ended almost surely by Lemma~\ref{lem:entrance_exit_basic_properties}, it follows from Theorem~\ref{thm:one_end_uniqueness} that $\mathscr{L}(X)$ is distributed as the wired uniform spanning in-arboretum of $G$ rooted at infinity. 

\medskip

Now, it also follows from the BEST theorem that, conditional on $\mathcal{G}_n$, we can sample $X^n$ by first sampling the last-exit arborescence $\mathscr{L}(X^n)$, then sampling the remaining edges of each stack using independent uniform random permutations, then computing $X^n$ in terms of these stacks as in Section~\ref{subsec:Euler_background}. Since $\mathscr{L}(X^n)$ converges to $\mathscr{L}(X)$ and $G_n$ eventually contains every edge of $G$, the law of the stack configurations generating the path $X^n$ converge as $n\to\infty$ to the law specified in the statement of the theorem, where the last edges in the stack are given by the wired uniform spanning in-arborescence of $G$ rooted at infinity and the remaining edges in each stack are given by independent random permutations.  This implies that the law of $X^n$ also converges as $n\to\infty$ to that of the path generated by these stack configurations, concluding the proof. (To give a little more detail on this last point: we can couple the stack configurations generating $X^n$ with those described in the statement of the theorem so that if $K$ is any finite set of vertices then the two configurations agree on $K$ with probability tending to $1$. If we take $K$ to be the set of vertices reachable in $k$ steps from the origin, this forces the first $k$ steps of $X^n$ to coincide with those of the path generated by following the stacks in the coupled process. Taking $n\to\infty$ then $k\to\infty$ yields the claim.)
\end{proof}

\subsection*{Acknowledgements}
TH thanks Asaf Nachmias for introducing him to the problem in 2013. 
Both authors thank Persi Diaconis, Vadim Kaimanovich, and Russ Lyons for comments on a draft of the paper.
TH is supported by NSF grant DMS-2246494. 

\small{

	\bibliographystyle{abbrv}
	\bibliography{bibliography.bib}

}

	\end{document}